\documentclass[a4paper,fleqn]{cas-sc}

\usepackage[numbers,longnamesfirst]{natbib}
\usepackage{amsmath,amssymb,amsthm,mathtools,mathrsfs,bm}
\usepackage{hyperref}
\usepackage[capitalize,nameinlink]{cleveref}
\usepackage{enumitem}
\usepackage{graphicx}
\usepackage{booktabs}
\usepackage[section]{placeins}
\usepackage{microtype}
\usepackage{dsfont}
\usepackage{float}

\hypersetup{colorlinks=true,citecolor=blue,linkcolor=blue,urlcolor=blue,hypertexnames=false}
\RenewDocumentCommand \printorcid {} {}

\newtheorem{theorem}{Theorem}[section]
\newtheorem{proposition}[theorem]{Proposition}
\newtheorem{corollary}[theorem]{Corollary}
\newtheorem{lemma}[theorem]{Lemma}
\theoremstyle{definition}
\newtheorem{remark}[theorem]{Remark}

\newtheorem{example}[theorem]{Example}
\newcommand{\R}{\mathbb{R}}
\newcommand{\Sp}{\mathbb{S}_{++}}
\newcommand{\Sym}{\mathbb{S}}
\newcommand{\tr}{\operatorname{tr}}
\newcommand{\diag}{\operatorname{diag}}
\newcommand{\ip}[2]{\left\langle #1,#2\right\rangle}
\newcommand{\norm}[1]{\left\lVert #1\right\rVert}
\newcommand{\MN}{\mathcal{MN}}
\newcommand{\mcM}{\mathcal{M}}
\newcommand{\mcK}{\mathcal{K}}
\newcommand{\mcJ}{\mathcal{J}}
\newcommand{\mcL}{\mathscr{L}}

\newcommand{\ot}{\otimes}
\newcommand{\argmin}{\operatorname*{argmin}}
\newcommand{\tabnote}[1]{\par\smallskip\begin{minipage}{0.96\textwidth}\footnotesize\raggedright\emph{Note.}~#1\end{minipage}}

\def\tsc#1{\csdef{#1}{\textsc{\lowercase{#1}}\xspace}}
\tsc{WGM}
\tsc{QE}

\begin{document}
\let\WriteBookmarks\relax
\def\floatpagepagefraction{.35}
\def\textpagefraction{.08}

\shorttitle{Bures Geodesics for Kronecker Positive Definite Matrices}    

\shortauthors{Yang, J.,  Zhang, Y.}  

\title [mode = title]{Bures Geodesics and Restricted Barycenters for Kronecker Positive Definite Matrices}  



%


%

\author[1]{Jiaping Yang}

\cormark[1]


\ead{jpyang22@m.fudan.edu.cn}



\affiliation[1]{organization={School of Mathematical Sciences, Fudan University},
	city={Shanghai},
	postcode={200433}, 
	country={China}}

\author[1]{Yunxin Zhang}


\ead{xyz@fudan.edu.cn}




\cortext[1]{Corresponding author}
















\begin{abstract}
We study the extrinsic Bures--Wasserstein geometry of the determinant-normalized Kronecker model $\mcK_n=\{V\ot U:U,V\in\Sp^n,\ \det U=1\}\subset\Sp^{n^2}$, asking when the ambient Bures geodesic between two Kronecker positive definite matrices can remain in this lower-dimensional model. Local membership near an endpoint is shown to be equivalent to membership of the whole segment, and this happens exactly in the one-factor cases: either $U_1=U_0$ or $V_1$ is a positive scalar multiple of $V_0$. Consequently, any endpoint pair not confined to these one-factor alternatives leaves the model immediately. The criterion is expressed by a partial-trace residual. In fixed commuting charts it becomes an equivalent rank-one square-root profile and yields computable departure diagnostics. We also obtain exact formulas for two restricted barycenter problems: fixed commuting-coordinate slices, solved by Perron singular vectors, and one-factor subfamilies, reduced to standard Bures--Wasserstein barycenters on $\Sp^n$.
\end{abstract}



\begin{keywords}
 Kronecker positive definite matrices\sep Bures--Wasserstein distance\sep Geodesic closure \sep Wasserstein barycenters
 
\end{keywords}

\maketitle

\section{Introduction}
Kronecker products of positive definite matrices are basic objects in matrix analysis and numerical linear algebra \cite{HornJohnson2013,VanLoan2000}. They also provide the covariance structure in separable models for array-valued data \cite{Hoff2011,WernerJanssonStoica2008,StegleEtAl2011,DrtonKurikiHoff2021}. We work with the determinant-normalized Kronecker model
\[
\mcK_n=\{V\ot U:U,V\in\Sp^n,\ \det U=1\}\subset\Sp^{n^2}.
\]
The gauge condition removes the scalar ambiguity $(cV)\ot(c^{-1}U)=V\ot U$ and gives unique factor coordinates. The apparent asymmetry in some statements is only a consequence of placing the normalization on the $U$-factor.

The Bures--Wasserstein geometry of the full positive-definite cone is well understood, but an ambient geodesic has no reason to respect a lower-dimensional matrix constraint. The question addressed here is extrinsic: which Kronecker endpoint pairs generate an ambient Bures segment that stays in $\mcK_n$, even locally at an endpoint? The answer is rigid. Endpoint-local retention is equivalent to whole-segment retention, and both occur only in the two one-factor cases: either $U_1=U_0$ or $V_1$ is a positive scalar multiple of $V_0$. Outside these alternatives, the ambient geodesic leaves the Kronecker model immediately.

We call these one-factor subfamilies factor leaves: they are the subfamilies where one normalized factor is fixed, up to the scalar gauge. The analysis develops several matrix-analytic reductions. A pairwise spectral formula expresses Bures distances between Kronecker products through two factor-size computations. For noncommuting endpoints, a partial-trace residual for the whitened initial velocity detects endpoint closure and leads to the factor-leaf rigidity theorem. In fixed commuting charts, the same obstruction becomes a rank-one square-root profile, giving departure moduli and excluding isolated interior returns. For barycenters, the scope is restricted to settings where exact formulas are available: a fixed commuting-coordinate slice has an explicit Perron singular-vector minimizer, while a common leaf reduces to the standard Bures--Wasserstein barycenter problem on $\Sp^n$ \cite{AguehCarlier2011,AlvarezEstebanDelBarrioCuestaAlbertosMatran2016}. Notably, no global barycenter formula on all of $\mcK_n$ is claimed.

The closest matrix-analysis references are the tensor-product Wasserstein mean identities in \cite{HwangKim2020} and the linearity problem for Cartan and Wasserstein means in \cite{ChoiKimLim2024}. Those works concern mean operations and tensor-product structure. Here the Kronecker parametrization is instead treated as an embedded submanifold of the full cone, and the central question is preservation by the ambient Bures geometry. Background on the Bures--Wasserstein distance can be found in \cite{Bures1969,DowsonLandau1982,OlkinPukelsheim1982,Gelbrich1990,BhatiaJainLim2019}; related statistical motivation appears in recent work on Kronecker covariance geometry and testing \cite{GuggenbergerKleibergenMavroeidis2023,YuXieZhou2023,McCormackHoff2025,SimonisWells2025}.

The remainder of this paper is organized as follows.
\Cref{sec:preliminaries} introduces the normalized model and the pairwise reduction. \Cref{sec:geodesicclosure} proves the geodesic closure results and endpoint rigidity theorem. \Cref{sec:barycenters} treats fixed-coordinate-slice and leafwise barycenters. Numerical checks appear in \Cref{sec:numerics}, followed by concluding remarks in \Cref{sec:con}.

\section{Preliminaries}
\label{sec:preliminaries}

\subsection{Determinant-normalized Kronecker model}
Throughout, assume $n\ge2$ unless stated otherwise.  Our main object is the submanifold $\mcK_n\subset\Sp^{n^2}$. The matrix-normal notation below records the Gaussian interpretation of the pairwise formula; the structural results themselves are statements about positive definite matrices.
All matrix inner products and norms are Frobenius unless another norm is explicitly indicated:
\[
\ip{X}{Y}_F:=\tr(X^\top Y),\qquad \norm{X}_F^2:=\ip{X}{X}_F .
\]

A matrix $K\in\R^{n^2\times n^2}$ is viewed as an $n\times n$ block matrix $K=[K_{qr}]_{q,r=1}^n$ with blocks $K_{qr}\in\R^{n\times n}$, consistent with the convention that $V\ot U$ has $(q,r)$ block $v_{qr}U$.  The partial traces used below are
\[
\operatorname{tr}_1(K):=\sum_{q=1}^n K_{qq},
\qquad
\operatorname{tr}_2(K):=\bigl[\tr(K_{qr})\bigr]_{q,r=1}^n .
\]
Hence $\operatorname{tr}_1(V\ot U)=\tr(V)U$ and $\operatorname{tr}_2(V\ot U)=\tr(U)V$.

The matrix-normal notation $X\sim \MN(M,U,V)$ means $\operatorname{vec}(X)\sim \mathcal N(\operatorname{vec}(M),V\ot U)$. Accordingly, matrix-normal laws are identified with their vectorized Gaussian representations.

The factorization is determined only up to reciprocal scaling, since $(cV)\ot(c^{-1}U)=V\ot U$ for $c>0$. Imposing the gauge $\det U=1$ removes this scalar indeterminacy; set
\[
\mcM_n:=\{(U,V)\in \Sp^n\times \Sp^n:\det U=1\},
\qquad
\Phi(U,V):=V\ot U.
\]
The image $\mcK_n:=\Phi(\mcM_n)\subset \Sp^{n^2}$ is the \emph{determinant-normalized Kronecker positive-definite model}.

\begin{lemma}
	\label{lem:modelmanifold}
	The map $\Phi:\mcM_n\to\Sp^{n^2}$ is a smooth embedding. Therefore, $\mcK_n$ is a smooth embedded submanifold of $\Sp^{n^2}$ of dimension $n(n+1)-1$. Moreover,
	\[
	T_{(U,V)}\mcM_n
	=
	\{(H_U,H_V)\in\Sym^n\times\Sym^n:\tr(U^{-1}H_U)=0\},
	\]
	and
	\[
	D\Phi_{(U,V)}[H_U,H_V]=V\ot H_U+H_V\ot U.
	\]
\end{lemma}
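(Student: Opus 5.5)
We first show that $\mcM_n$ is a smooth manifold and compute its tangent space. Next we check that $\Phi$ is an injective immersion. Finally we upgrade this to an embedding by exhibiting a continuous inverse on the image.

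\textbf{The domain.} The set $\mcM_n$ is the level set $\{\det U=1\}$ inside the open set $\Sp^n\times\Sp^n\subset\Sym^n\times\Sym^n$. By Jacobi's formula,
\[
D(\det)_U[H_U]=\det(U)\,\tr(U^{-1}H_U),
\]
which is nonzero for $H_U=U$. Hence $1$ is a regular value, so $\mcM_n$ is a smooth submanifold of dimension $2\cdot\tfrac{n(n+1)}{2}-1=n(n+1)-1$. Its tangent space is the kernel displayed in the statement. The formula for $D\Phi$ follows from bilinearity of $\ot$ (product rule).

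\textbf{Injectivity and immersion.} For injectivity, suppose $V\ot U=V'\ot U'$. Applying $\operatorname{tr}_1$ gives $\tr(V)U=\tr(V')U'$, so $U'=cU$ with $c>0$. The constraint $\det U=\det U'=1$ forces $c^n=1$, hence $c=1$. Then $\operatorname{tr}_2$ gives $\tr(U)V=\tr(U)V'$, and since $\tr U>0$ we get $V=V'$.

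For the immersion property, suppose $V\ot H_U+H_V\ot U=0$. Apply $\operatorname{tr}_1$ and $\operatorname{tr}_2$:
\[
\tr(V)H_U+\tr(H_V)U=0,\qquad \tr(H_U)V+\tr(U)H_V=0.
\]
The first gives $H_U=aU$ with $a=-\tr(H_V)/\tr V$. The tangency condition gives $0=\tr(U^{-1}H_U)=na$, so $H_U=0$. Then the second equation gives $\tr(U)H_V=0$, so $H_V=0$.

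\textbf{Embedding.} We build an explicit continuous inverse on $\mcK_n$. For $K=V\ot U\in\mcK_n$, the matrix $\operatorname{tr}_1(K)=\tr(V)U$ is positive definite, and we recover
\[
U=\frac{\operatorname{tr}_1(K)}{\det(\operatorname{tr}_1K)^{1/n}},\qquad V=\frac{\operatorname{tr}_2(K)}{\tr U}.
\]
These formulas are restrictions of continuous (indeed smooth) maps defined on the open set $\{K:\operatorname{tr}_1K\in\Sp^n\}$. So $\Phi$ is a homeomorphism onto its image with the subspace topology. An injective immersion that is a topological embedding is a smooth embedding, and the submanifold and dimension claims then follow.

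\textbf{Main obstacle.} The only step needing care is this topological-embedding step, since injective immersions need not be embeddings. The explicit partial-trace inverse above settles it directly.
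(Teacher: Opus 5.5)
Your proof is correct and follows essentially the same route as the paper: $\mcM_n$ as a regular level set, an injective immersion checked with partial traces, and the explicit inverse $U=\det(\operatorname{tr}_1K)^{-1/n}\operatorname{tr}_1K$, $V=\tr(U)^{-1}\operatorname{tr}_2K$, which gives a topological and hence smooth embedding. The differences are only local: you get injectivity from $\operatorname{tr}_1$ rather than by comparing $(1,1)$ blocks, and you kill the kernel of $D\Phi$ using both partial traces together with $\tr(U^{-1}H_U)=0$, whereas the paper multiplies by $I_n\ot U^{-1}$ and applies $\operatorname{tr}_2$ once.
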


\begin{proof}
	The manifold $\mcM_n$ is the regular level set of $(U,V)\mapsto \log\det U$, which induces the tangent space. Differentiating $\Phi(U,V)=V\ot U$ produces the displayed differential.
	
	For injectivity, suppose $V\ot U=\widetilde V\ot\widetilde U$ with both pairs in $\mcM_n$.  Comparing the $(1,1)$ blocks yields
	$v_{11}U=\widetilde v_{11}\widetilde U$, where $v_{11},\widetilde v_{11}>0$. So $\widetilde U=cU$ for some $c>0$, and comparison of all blocks gives $V=c\widetilde V$. The determinant constraints force $c=1$.
	
	The inverse on the image is explicit. If $K=V\ot U$ is written in normalized form, then
	$\operatorname{tr}_1(K)=\tr(V)\,U$ and $\operatorname{tr}_2(K)=\tr(U)\,V$. Since $\det U=1$, we have $U=\det(\operatorname{tr}_1(K))^{-1/n}\operatorname{tr}_1(K)$ and $V=\tr(U)^{-1}\operatorname{tr}_2(K)$. Therefore, $\Phi^{-1}$ is continuous on $\mcK_n$.
	
	It only remains to verify immersion. Let
	$V\ot H_U+H_V\ot U=0$ and $\tr(U^{-1}H_U)=0$. Multiplying by $I\ot U^{-1}$ and taking the second partial trace yields $nH_V=0$, whence $H_V=0$ and hence $H_U=0$.
\end{proof}

\subsection{Pairwise Kronecker reduction}
\label{sec:pairwise}
For Gaussian laws on $\R^d$, the squared $W_2$ distance is given by
\begin{equation}
	\label{eq:GaussianW2}
	W_2^2\bigl(\mathcal N(m_0,K_0),\mathcal N(m_1,K_1)\bigr)
	=
	\norm{m_0-m_1}^2
	+\tr(K_0)+\tr(K_1)
	-2\tr\bigl((K_0^{1/2}K_1K_0^{1/2})^{1/2}\bigr).
\end{equation}
The covariance term in \eqref{eq:GaussianW2} is the squared Bures--Wasserstein distance on $\Sp^d$,
\[
d_{\mathrm B}^2(A,B)
:=
\tr(A)+\tr(B)-2\tr\bigl((A^{1/2}BA^{1/2})^{1/2}\bigr),
\qquad A,B\in\Sp^d.
\]

The Bures geodesic from $A$ to $B$ is
\begin{equation}
	\label{eq:BuresGeoGeneral}
	\gamma_{A\to B}(t)=\bigl((1-t)I+tT_{A\to B}\bigr)A\bigl((1-t)I+tT_{A\to B}\bigr),
	\qquad t\in[0,1],
\end{equation}
where $T_{A\to B}=A^{-1/2}(A^{1/2}BA^{1/2})^{1/2}A^{-1/2}$.
The transport is positive definite and satisfies $T_{A\to B}AT_{A\to B}=B$, so \eqref{eq:BuresGeoGeneral} indeed joins $A$ to $B$.
When $A$ and $B$ commute,
\begin{equation}
	\label{eq:BuresGeoCommuting}
	\gamma_{A\to B}(t)=\bigl((1-t)A^{1/2}+tB^{1/2}\bigr)^2.
\end{equation}
These formulas are standard; see \cite{DowsonLandau1982,OlkinPukelsheim1982,Gelbrich1990,Takatsu2011,BhatiaJainLim2019}.

The following reduction will be used repeatedly. It is a direct consequence of standard Kronecker product spectral identities \cite{HornJohnson2013,VanLoan2000}, and it replaces one ambient Bures computation on $\Sp^{n^2}$ by two factor-size spectral computations on $\Sp^n$.

\begin{lemma}
	\label{lem:pairwiseW2}
	Let $K_i=V_i\ot U_i$ with $U_i,V_i\in\Sp^n$, $i=0,1$, and set
	\[
	A:=V_0^{1/2}V_1V_0^{1/2},
	\qquad
	B:=U_0^{1/2}U_1U_0^{1/2}.
	\]
	Suppose the eigenvalues of $A$ and $B$ are $\alpha_1,\dots,\alpha_n$ and $\beta_1,\dots,\beta_n$, respectively. Then
	\begin{equation}
		\label{eq:pairwiseBuresKronecker}
		d_{\mathrm B}^2(K_0,K_1)
		=
		\tr(U_0)\tr(V_0)+\tr(U_1)\tr(V_1)
		-2\sum_{p=1}^n\sum_{q=1}^n \sqrt{\alpha_p\beta_q}.
	\end{equation}
	Equivalently,
	\[
	d_{\mathrm B}^2(K_0,K_1)
	=
	\tr(U_0)\tr(V_0)+\tr(U_1)\tr(V_1)
	-2\tr(A^{1/2})\tr(B^{1/2}).
	\]
	For Gaussian laws $\mu_i=\mathcal N(m_i,K_i)$, $i=0,1$, this gives $W_2^2(\mu_0,\mu_1)=\norm{m_0-m_1}^2+d_{\mathrm B}^2(K_0,K_1)$. For matrix-normal laws $\mu_i=\MN(M_i,U_i,V_i)$, $i=0,1$, we have
	\begin{equation}
		\label{eq:pairwiseW2Kronecker}
		W_2^2(\mu_0,\mu_1)
		=
		\norm{M_0-M_1}_F^2
		+\tr(U_0)\tr(V_0)+\tr(U_1)\tr(V_1)
		-2\sum_{p=1}^n\sum_{q=1}^n \sqrt{\alpha_p\beta_q}.
	\end{equation}
\end{lemma}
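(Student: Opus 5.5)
The plan is to reduce everything to the trace of the square root of $K_0^{1/2}K_1K_0^{1/2}$, using the mixed-product rule for Kronecker products. The first step is the identity $(V\ot U)^{1/2}=V^{1/2}\ot U^{1/2}$ for $U,V\in\Sp^n$. The right-hand side is positive definite, since its eigenvalues are products of positive eigenvalues. Its square is $V\ot U$ by the mixed-product rule. Uniqueness of the positive definite square root then gives the identity.

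With $K_0^{1/2}=V_0^{1/2}\ot U_0^{1/2}$ in hand, the mixed-product rule yields
\[
K_0^{1/2}K_1K_0^{1/2}=\bigl(V_0^{1/2}V_1V_0^{1/2}\bigr)\ot\bigl(U_0^{1/2}U_1U_0^{1/2}\bigr)=A\ot B .
\]
Both $A$ and $B$ lie in $\Sp^n$, so applying the same square-root identity once more gives $(A\ot B)^{1/2}=A^{1/2}\ot B^{1/2}$. Taking traces with $\tr(X\ot Y)=\tr(X)\tr(Y)$ produces $\tr(A^{1/2})\tr(B^{1/2})$. This equals $\sum_{p,q}\sqrt{\alpha_p\beta_q}$, because the eigenvalues of $A^{1/2}$ are the $\sqrt{\alpha_p}$ and those of $B^{1/2}$ are the $\sqrt{\beta_q}$. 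Similarly $\tr(K_i)=\tr(U_i)\tr(V_i)$. Substituting these into the definition of $d_{\mathrm B}^2$ gives \eqref{eq:pairwiseBuresKronecker} together with its equivalent trace form.

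The Gaussian statement then follows at once from \eqref{eq:GaussianW2}, since the covariance part of that formula is exactly $d_{\mathrm B}^2(K_0,K_1)$. For matrix-normal laws, the convention in the paper identifies $\MN(M_i,U_i,V_i)$ with $\mathcal N(\operatorname{vec}(M_i),V_i\ot U_i)$. It therefore remains only to observe that $\norm{\operatorname{vec}(M_0)-\operatorname{vec}(M_1)}=\norm{M_0-M_1}_F$, which yields \eqref{eq:pairwiseW2Kronecker}.

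No step here is a genuine obstacle. The only point requiring care is the justification of the Kronecker square-root identity via uniqueness of the positive square root. The other thing to check is that the block convention, in which $V\ot U$ has $(q,r)$ block $v_{qr}U$, is the standard Kronecker product. That ensures the mixed-product and trace rules apply in the form used above.
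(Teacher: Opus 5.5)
Your proposal is correct and essentially the same as the paper's proof: both reduce to $K_0^{1/2}K_1K_0^{1/2}=A\ot B$, then read off the trace of its square root and substitute into the Bures and Gaussian formulas. The only difference is cosmetic: the paper uses the eigenvalues $\alpha_p\beta_q$ of $A\ot B$ directly, while you use $(A\ot B)^{1/2}=A^{1/2}\ot B^{1/2}$ together with multiplicativity of the trace. You also explicitly justify the Kronecker square-root identity, which the paper leaves implicit.
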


\begin{proof}
	The Bures formula yields
	\[
	d_{\mathrm B}^2(K_0,K_1)
	=
	\tr(K_0)+\tr(K_1)-2\tr\bigl((K_0^{1/2}K_1K_0^{1/2})^{1/2}\bigr),
	\]
	where
	\[
	K_0^{1/2}K_1K_0^{1/2}
	=
	(V_0^{1/2}V_1V_0^{1/2})\ot(U_0^{1/2}U_1U_0^{1/2})=A\ot B.
	\]
	The eigenvalues of this Kronecker product are $\alpha_p\beta_q$, whence $\tr\bigl((K_0^{1/2}K_1K_0^{1/2})^{1/2}\bigr)=\sum_{p,q}\sqrt{\alpha_p\beta_q}$. Together with $\tr(V_i\ot U_i)=\tr(V_i)\tr(U_i)$ we obtain \eqref{eq:pairwiseBuresKronecker}. The Gaussian and matrix-normal formulas follow from \eqref{eq:GaussianW2} after identifying $m_i=\operatorname{vec}(M_i)$.
\end{proof}

\begin{remark}
	\label{rem:pairwisecomplexity}
	For dense matrices, a naive ambient evaluation of the Bures term would require a square-root or spectral computation for an $n^2\times n^2$ positive-definite matrix, which means $O(n^6)$ work and $O(n^4)$ storage. By \Cref{lem:pairwiseW2}, the reduced formula requires only two $n\times n$ positive-definite spectral computations, hence $O(n^3)$ work and $O(n^2)$ storage.
\end{remark}

\section{Geodesic closure and endpoint tangency}
\label{sec:geodesicclosure}

We now characterize endpoint pairs for which the ambient Bures segment is retained by the determinant-normalized Kronecker model. Without commutativity, retention is understood at an endpoint: the segment must stay in $\mcK_n$ on a nontrivial interval starting there. The endpoint-local statements below are written at $K_0$; the corresponding assertions at $K_1$ follow by reversing the two endpoints. The rigidity theorem below shows that this local condition is already equivalent to whole-segment closure. In a fixed commuting chart one can say more, because the same factor-leaf criterion also excludes isolated interior returns and leads to explicit departure moduli.

Choose $U_\ast\in\Sp^n$ with $\det U_\ast=1$ and $V_\ast\in\Sp^n$, and write
\[
\mathcal F(U_\ast):=\{V\ot U_\ast:V\in\Sp^n\},
\qquad
\mathcal G(V_\ast):=\{\alpha V_\ast\ot U:\alpha>0,\ U\in\Sp^n,\ \det U=1\},
\]
for the two factor leaves through a point of the model. Their isotropic representatives are the canonical isotropic factor leaves
\[
\mathcal L_{\mathrm{row}}:=\{\alpha I_n\ot U:\alpha>0,\ U\in\Sp^n,\ \det U=1\},
\qquad
\mathcal L_{\mathrm{col}}:=\{V\ot I_n:V\in\Sp^n\}.
\]
Here $\mathcal F(U_\ast)$ fixes the determinant-normalized $U$-factor, whereas $\mathcal G(V_\ast)$ fixes the $V$-factor up to the scalar absorbed by the determinant gauge.

\subsection{Geodesic closure of factor leaves}

\begin{lemma}
	\label{lem:geodesicallyclosedleaves}
	For every $U_\ast\in\Sp^n$ with $\det U_\ast=1$, the leaf $\mathcal F(U_\ast)=\{V\ot U_\ast:V\in\Sp^n\}$ is geodesically closed in the ambient Bures geometry. More precisely, if $K_i=V_i\ot U_\ast$, $i=0,1$, then the ambient Bures geodesic satisfies
	\[
	\gamma_{K_0\to K_1}(t)=\gamma_{V_0\to V_1}(t)\ot U_\ast,\qquad t\in[0,1].
	\]
	For every $V_\ast\in\Sp^n$, the leaf $\mathcal G(V_\ast)=\{\alpha V_\ast\ot U:\alpha>0,\ U\in\Sp^n,\ \det U=1\}$ is also geodesically closed. If $K_i=\alpha_iV_\ast\ot U_i$, $i=0,1$, and $M_t:=\gamma_{\alpha_0U_0\to \alpha_1U_1}(t)$, then
	\[
	\gamma_{K_0\to K_1}(t)=V_\ast\ot M_t=\alpha_tV_\ast\ot U_t\in\mathcal G(V_\ast),
	\]
	where $\alpha_t:=(\det M_t)^{1/n}$ and $U_t:=(\det M_t)^{-1/n}M_t$.
\end{lemma}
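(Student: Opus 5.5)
The plan is to compute the ambient transport map $T_{K_0\to K_1}$ explicitly in Kronecker form and then substitute it into \eqref{eq:BuresGeoGeneral}. The tools are the mixed-product rule $(A\ot B)(C\ot D)=AC\ot BD$ and the fact that the positive square root of a Kronecker product of positive definite matrices is the Kronecker product of the square roots. These were already used in the proof of \Cref{lem:pairwiseW2}.

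For the leaf $\mathcal F(U_\ast)$, set $K_i=V_i\ot U_\ast$. Then $K_0^{1/2}=V_0^{1/2}\ot U_\ast^{1/2}$, and
\[
K_0^{1/2}K_1K_0^{1/2}=(V_0^{1/2}V_1V_0^{1/2})\ot U_\ast^2 ,
\]
so its square root is $(V_0^{1/2}V_1V_0^{1/2})^{1/2}\ot U_\ast$. Conjugating by $K_0^{-1/2}=V_0^{-1/2}\ot U_\ast^{-1/2}$ gives
\[
T_{K_0\to K_1}=T_{V_0\to V_1}\ot I_n .
\]
Hence
\[
(1-t)I+tT_{K_0\to K_1}=\bigl((1-t)I+tT_{V_0\to V_1}\bigr)\ot I_n .
\]
Inserting this into \eqref{eq:BuresGeoGeneral} and applying the mixed-product rule once more yields $\gamma_{V_0\to V_1}(t)\ot U_\ast$. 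Since $\gamma_{V_0\to V_1}(t)\in\Sp^n$ and $\det U_\ast=1$, this curve lies in $\mathcal F(U_\ast)$ for every $t\in[0,1]$.

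The second leaf $\mathcal G(V_\ast)$ is handled symmetrically. Write $K_i=V_\ast\ot(\alpha_iU_i)$, moving the scalar onto the right factor. The same computation with the roles of the factors swapped gives
\[
T_{K_0\to K_1}=I_n\ot T_{\alpha_0U_0\to\alpha_1U_1},
\qquad
\gamma_{K_0\to K_1}(t)=V_\ast\ot M_t .
\]
Here $V_\ast^{1/2}V_\ast V_\ast^{1/2}=V_\ast^2$ has square root $V_\ast$, which cancels against the conjugation by $V_\ast^{-1/2}$ to leave $I_n$. Because $M_t\in\Sp^n$, we have $\det M_t>0$, so $\alpha_t=(\det M_t)^{1/n}>0$ and $U_t=\alpha_t^{-1}M_t$ satisfies $\det U_t=1$. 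This gives $V_\ast\ot M_t=\alpha_tV_\ast\ot U_t\in\mathcal G(V_\ast)$.

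No step is a serious obstacle. The only point needing care is that $(A\ot B)^{1/2}=A^{1/2}\ot B^{1/2}$ for $A,B\in\Sp^n$. This holds because the right-hand side is positive definite, having eigenvalues $\sqrt{a_pb_q}>0$, and its square is $A\ot B$; uniqueness of the positive square root then gives the identity. The rest is bookkeeping with the mixed-product rule and the scalar gauge.
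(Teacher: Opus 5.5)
Your proposal is correct and follows essentially the same route as the paper: both compute the ambient transport in factorized form, $T_{K_0\to K_1}=T_{V_0\to V_1}\ot I_n$ (respectively $I_n\ot T_{\alpha_0U_0\to\alpha_1U_1}$), substitute it into \eqref{eq:BuresGeoGeneral}, and then renormalize $M_t$ by $(\det M_t)^{1/n}$. Your justification of $(A\ot B)^{1/2}=A^{1/2}\ot B^{1/2}$ and your check that $\gamma_{V_0\to V_1}(t)\in\Sp^n$ fill in details the paper leaves implicit.
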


\begin{proof}
	For $\mathcal F(U_\ast)$, put
	\[
	B:=(V_0^{1/2}V_1V_0^{1/2})^{1/2},
	\qquad
	S:=V_0^{-1/2}BV_0^{-1/2}\in\Sp^n.
	\]
	The corresponding Bures transport on $\Sp^{n^2}$ is
	\[
	T:=K_0^{-1/2}\bigl(K_0^{1/2}K_1K_0^{1/2}\bigr)^{1/2}K_0^{-1/2}
	=
	S\ot I_n.
	\]
	Substitution in \eqref{eq:BuresGeoGeneral} yields $\gamma_{K_0\to K_1}(t)=\gamma_{V_0\to V_1}(t)\ot U_\ast$.
	
	For $\mathcal G(V_\ast)$, write $K_i=V_\ast\ot M_i$, where $M_i:=\alpha_iU_i\in\Sp^n$.
	Repeating the calculation in the second factor gives
	\[
	T:=K_0^{-1/2}\bigl(K_0^{1/2}K_1K_0^{1/2}\bigr)^{1/2}K_0^{-1/2}
	=
	I_n\ot S,
	\]
	where $S=M_0^{-1/2}(M_0^{1/2}M_1M_0^{1/2})^{1/2}M_0^{-1/2}$. Then $\gamma_{K_0\to K_1}(t)=V_\ast\ot\gamma_{M_0\to M_1}(t)=V_\ast\ot M_t$. Since $M_t\in\Sp^n$, setting $\alpha_t:=(\det M_t)^{1/n}$ and $U_t:=(\det M_t)^{-1/n}M_t$ produces $\det U_t=1$ and $V_\ast\ot M_t=\alpha_tV_\ast\ot U_t$.
\end{proof}

\subsection{Fixed-chart geodesic closure}
The first step is to work in a prescribed simultaneous diagonalizing chart. The basis is fixed throughout the statement; the endpoint rigidity theorem below gives the corresponding endpoint-pair closure conclusion without this commuting hypothesis.

\begin{theorem}
	\label{thm:closurecriterion}
	Let $K_i=V_i\ot U_i$ with $(U_i,V_i)\in\mcM_n$, $i=0,1$, so that $\det U_i=1$, and assume that $U_0,U_1$ are simultaneously diagonalizable and $V_0,V_1$ are also simultaneously diagonalizable. Write
	\[
	U_i=Q\diag(u_{i,1},\dots,u_{i,n})Q^\top,
	\qquad
	V_i=R\diag(v_{i,1},\dots,v_{i,n})R^\top.
	\]
	Let $K_t$ denote the Bures geodesic from $K_0$ to $K_1$. In the common basis $R\ot Q$,
	\[
	K_t=(R\ot Q)\diag\bigl(h_{pq}(t)^2\bigr)_{q,p=1}^n(R\ot Q)^\top,
	\]
	where
	\[
	h_{pq}(t)=(1-t)\sqrt{u_{0,p}v_{0,q}}+t\sqrt{u_{1,p}v_{1,q}}.
	\]
	Set $H_t=[h_{pq}(t)]_{p,q=1}^n$.
	Under the above simultaneous-diagonalization hypothesis, the following fixed-chart conditions are equivalent:
	\begin{enumerate}[label=(\roman*),leftmargin=2.4em]
		\item for some $t\in(0,1)$ one has $K_t\in\mcK_n$;
		\item for every $t\in[0,1]$ one has $K_t\in\mcK_n$;
		\item for some $t\in(0,1)$, the square-root profile $H_t$ has rank one;
		\item the endpoints lie on a common factor leaf, namely either $K_0,K_1\in\mathcal F(U_0)=\mathcal F(U_1)$ or $K_0,K_1\in\mathcal G(V_0)=\mathcal G(V_1)$.
	\end{enumerate}
	Equivalently, within this fixed commuting chart, the commuting Bures geodesic remains in $\mcK_n$ precisely when the endpoints move along a single factor leaf.
\end{theorem}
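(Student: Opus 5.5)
Since $K_0$ and $K_1$ both act diagonally in the basis $R\ot Q$, they commute. The first step is the diagonal form of $K_t$, which follows directly from \eqref{eq:BuresGeoCommuting}: we have $K_i^{1/2}=(R\ot Q)\diag(\sqrt{u_{i,p}v_{i,q}})(R\ot Q)^\top$, and the interpolation $(1-t)K_0^{1/2}+tK_1^{1/2}$ is diagonal with entries $h_{pq}(t)$, which we then square. The key observation is that a matrix diagonal in the basis $R\ot Q$ with entries $d_{pq}>0$ lies in $\mcK_n$ exactly when the array $[d_{pq}]$ has rank one. One direction is clear: if $d_{pq}=a_pb_q$, then the matrix equals $(R\diag(b)R^\top)\ot(Q\diag(a)Q^\top)$, and we normalize the determinant of the $U$-factor. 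For the converse, suppose $K=V\ot U$ with $K$ diagonal in this basis. Applying the partial traces, $\operatorname{tr}_1$ and $\operatorname{tr}_2$ of $(R\ot Q)D(R\ot Q)^\top$ are $Q\,\diag(\sum_q d_{pq})\,Q^\top$ and $R\,\diag(\sum_p d_{pq})\,R^\top$. By the explicit inverse in \Cref{lem:modelmanifold}, $U$ and $V$ are therefore diagonal in $Q$ and $R$ respectively, so $d_{pq}=a_pb_q$. Finally, $[h_{pq}^2]$ has rank one if and only if $[h_{pq}]$ does, because all entries are positive and rank one for positive arrays means that all $2\times2$ minors vanish, i.e.\ $h_{pq}h_{p'q'}=h_{pq'}h_{p'q}$. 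This gives (i)$\Leftrightarrow$(iii).

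The core step is (iii)$\Rightarrow$(iv). Write $H_t=(1-t)\,a_0b_0^\top+t\,a_1b_1^\top$ with $a_i=(\sqrt{u_{i,p}})_p$ and $b_i=(\sqrt{v_{i,q}})_q$, all entries positive. A sum of two rank-one matrices with nonzero coefficients has rank one only if $a_0\parallel a_1$ or $b_0\parallel b_1$. Indeed, if both pairs were linearly independent, then $[a_0\ a_1]\,\diag(1-t,t)\,[b_0\ b_1]^\top$ would have rank two. If $a_0\parallel a_1$, then $U_1=cU_0$ in the common chart, and $\det U_0=\det U_1=1$ forces $c=1$, so both endpoints lie in $\mathcal F(U_0)$. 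If $b_0\parallel b_1$, then $V_1=\lambda V_0$ with $\lambda>0$, so both endpoints lie in $\mathcal G(V_0)=\mathcal G(V_1)$. The main point of care is the linear-algebra rank claim and the requirement $t\in(0,1)$, which ensures both coefficients are nonzero. I expect this step to be routine once it is written in the factored form above.

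For (iv)$\Rightarrow$(ii) I would invoke \Cref{lem:geodesicallyclosedleaves} directly. Alternatively, one can note that in either case $H_t$ is visibly rank one for all $t$, because one factor vector is common to both terms. Then (ii)$\Rightarrow$(i) is trivial, and the cycle (i)$\Rightarrow$(iii)$\Rightarrow$(iv)$\Rightarrow$(ii)$\Rightarrow$(i) closes.
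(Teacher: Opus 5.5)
Your proof is correct and follows the paper's argument: the commuting geodesic formula, the rank-one characterization of diagonal members of $\mcK_n$, the rank of $(1-t)a_0b_0^\top+t\,a_1b_1^\top$ for $t\in(0,1)$, and the determinant normalization forcing $U_1=U_0$. The only local differences are two. To show that a diagonal $V\ot U$ has diagonal factors, you use the partial-trace inverse from \Cref{lem:modelmanifold}, whereas the paper observes entrywise that a Kronecker product of positive definite matrices is diagonal only when both factors are. For (iv)$\Rightarrow$(ii) you cite \Cref{lem:geodesicallyclosedleaves}, whereas the paper checks directly that the profile $H_t$ has rank one for all $t$; both variants are valid.
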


\begin{proof}
	Since
	\[
	K_i=(R\ot Q)\diag(v_{i,q}u_{i,p})_{q,p=1}^n(R\ot Q)^\top,
	\qquad i=0,1,
	\]
	we may work throughout in the common basis $R\ot Q$, in which both $K_0$ and $K_1$ are diagonal. Since $K_0$ and $K_1$ commute, \eqref{eq:BuresGeoCommuting} implies
	\[
	K_t=\bigl((1-t)K_0^{1/2}+tK_1^{1/2}\bigr)^2,
	\]
	which is diagonal with entries $h_{pq}(t)^2$.
	
	Set $a_p:=\sqrt{u_{0,p}}$, $c_p:=\sqrt{u_{1,p}}$, $b_q:=\sqrt{v_{0,q}}$, and $d_q:=\sqrt{v_{1,q}}$. Define $H_t:=[h_{pq}(t)]_{p,q}=(1-t)ab^\top+tcd^\top$.
	For $t\in(0,1)$, all entries of $H_t$ are positive. We claim that
	\[
	K_t\in\mcK_n
	\quad\Longleftrightarrow\quad
	[h_{pq}(t)^2]_{p,q}\text{ has rank one}.
	\]
	In the basis $R\ot Q$ the matrix $K_t$ is diagonal. If $K_t\in\mcK_n$, then $K_t=\widetilde V\ot \widetilde U$ for some $(\widetilde U,\widetilde V)\in\mcM_n$. Conjugating by $R\ot Q$ yields $(R^\top\widetilde V R)\ot(Q^\top\widetilde UQ)$, which is diagonal. Since both factors are positive definite, each has positive diagonal entries. A Kronecker product of the two factors can be diagonal only when both factors are diagonal: any off-diagonal entry in one factor, paired with a positive diagonal entry in the other factor, produces an off-diagonal entry in the product. Thus
	\[
	R^\top\widetilde V R=\diag(y_1,\dots,y_n),
	\qquad
	Q^\top\widetilde UQ=\diag(x_1,\dots,x_n),
	\]
	with $x_p,y_q>0$, and then $[h_{pq}(t)^2]_{p,q}=[x_p y_q]_{p,q}=xy^\top$, which has rank one. Conversely, if $[h_{pq}(t)^2]=xy^\top$ with $x,y\in\R_{++}^n$, define $U:=Q\diag(x)Q^\top$ and $V:=R\diag(y)R^\top$. Then $K_t=(R\ot Q)\diag(y_qx_p)_{q,p}(R\ot Q)^\top=V\ot U$. The representation may fail to satisfy $\det U=1$, but the Kronecker product is unchanged under reciprocal rescaling. With $c:=(\det U)^{-1/n}>0$, $\widetilde U:=cU$, and $\widetilde V:=c^{-1}V$, we obtain $(\widetilde U,\widetilde V)\in\mcM_n$ and $\widetilde V\ot\widetilde U=K_t$, so that $K_t\in\mcK_n$.
	
	Since all entries are positive, the rank-one property of $[h_{pq}(t)^2]$ is equivalent to the rank-one property of $H_t$ itself: if $H_t=rs^\top$, then $[h_{pq}(t)^2]=(r\circ r)(s\circ s)^\top$. Conversely, if $[h_{pq}(t)^2]=xy^\top$ with $x,y\in\R_{++}^n$, then
	$H_t=[\sqrt{x_py_q}]_{p,q}=(\sqrt{x})(\sqrt{y})^\top$, since $H_t$ has positive entries. Hence \textup{(i)} and \textup{(iii)} are equivalent.
	
	The matrix $H_t$ is a sum of two rank-one matrices. Since $t\in(0,1)$ and $a,b,c,d$ are nonzero positive vectors, this sum has rank one only in the two standard alternatives: the left vectors are linearly dependent or the right vectors are linearly dependent. Thus $a\parallel c$ or $b\parallel d$.
	
	If $a\parallel c$, write $c=\tau a$ with $\tau>0$. Squaring componentwise gives $u_{1,p}=\tau^2u_{0,p}$ for every $p$, so $U_1=\tau^2U_0$. The determinant normalization gives $\tau^{2n}=1$, hence $\tau=1$ and $U_1=U_0$. The endpoints then lie on the common leaf $\mathcal F(U_0)=\mathcal F(U_1)$. If instead $b\parallel d$, then $d=\rho b$ for some $\rho>0$, and consequently $V_1=\rho^2V_0$. In that case the endpoints lie on the common $V$-factor leaf $\mathcal G(V_0)=\mathcal G(V_1)$. We have proved \textup{(iii)}$\Rightarrow$\textup{(iv)}.
	
	Conversely, assume \textup{(iv)}. If $K_0$ and $K_1$ lie on a common normalized $U$-leaf, then $U_1=U_0$ and $a=c$, so
	\[
	H_t=a\bigl((1-t)b+td\bigr)^\top
	\]
	has rank one for every $t\in[0,1]$. If they lie on a common normalized $V$-leaf, then $V_1=\mu V_0$ for some $\mu>0$, hence $d=\sqrt{\mu}\,b$, and
	\[
	H_t=\bigl((1-t)a+t\sqrt{\mu}\,c\bigr)b^\top
	\]
	again has rank one for every $t\in[0,1]$. The rank-one characterization gives $K_t\in\mcK_n$ for all $t$, proving \textup{(iv)}$\Rightarrow$\textup{(ii)}. The implication \textup{(ii)}$\Rightarrow$\textup{(i)} is immediate.
\end{proof}

\begin{remark}
	\label{rem:commutingimmediatedeparture}
	Under the hypotheses of \Cref{thm:closurecriterion}, suppose that neither factor pair is proportional: $U_1\ne \lambda U_0$ for every $\lambda>0$, and $V_1\ne \mu V_0$ for every $\mu>0$. Then $K_t\notin \mcK_n$ for $0<t<1$. Any interior return to $\mcK_n$ would force the endpoints onto a common factor leaf by \Cref{thm:closurecriterion}, contradicting the two assumptions above.
\end{remark}

\begin{remark}
	\label{rem:closurebasis}
	The simultaneous diagonalizing bases $Q$ and $R$ need not be unique when repeated eigenvalues occur. This does not affect \Cref{thm:closurecriterion}: conditions \textup{(i)}, \textup{(ii)}, and \textup{(iv)} are intrinsic statements about $K_0$, $K_1$, and the ambient geodesic, while the rank-one condition in \textup{(iii)} is equivalent to them within the chosen chart. Only the profile matrices $H_t$ and $M_t$ introduced below depend on that choice.
\end{remark}

\begin{example}
	\label{ex:twodimnonclosure}
	Let $n=2$ and take diagonal factors
	\[
	U_0=I_2,\qquad V_0=I_2,\qquad
	U_1=\diag(2,1/2),\qquad V_1=\diag(3,1).
	\]
	The endpoints commute and satisfy $\det U_0=\det U_1=1$, but they do not lie on a common factor leaf: $U_1\ne U_0$ and $V_1$ is not a scalar multiple of $V_0$. Hence \Cref{thm:closurecriterion} implies that the Bures geodesic is outside $\mcK_2$ for every $t\in(0,1)$. Equivalently, the square-root profile
	\[
	H_t=(1-t)\begin{pmatrix}1&1\\1&1\end{pmatrix}
	+t\begin{pmatrix}\sqrt6&\sqrt2\\ \sqrt{3/2}&1/\sqrt2\end{pmatrix}
	\]
	has rank two for every interior time.
	\[
	\det H_t
	=t(1-t)\left(\sqrt6+\frac1{\sqrt2}-\sqrt2-\sqrt{\frac32}\right)>0,
	\qquad 0<t<1.
	\]
\end{example}
\subsection{Departure moduli}

The fixed-chart criterion is qualitative, but it also gives numerical diagnostics. Two Frobenius departure moduli are used, one for the square-root profile and one for the diagonal profile. They measure failure of the rank-one condition in \Cref{thm:closurecriterion}; they are not ambient distances to $\mcK_n$. The relevant singular-value computations have dimension at most three.

\begin{proposition}
	\label{prop:departuremodulus}
	Assume the hypotheses and notation of \cref{thm:closurecriterion}. Define $a:=(\sqrt{u_{0,1}},\dots,\sqrt{u_{0,n}})^\top$, $c:=(\sqrt{u_{1,1}},\dots,\sqrt{u_{1,n}})^\top$, $b:=(\sqrt{v_{0,1}},\dots,\sqrt{v_{0,n}})^\top$, and $d:=(\sqrt{v_{1,1}},\dots,\sqrt{v_{1,n}})^\top$. For $t\in[0,1]$, let $H_t:=(1-t)\,ab^\top+t\,cd^\top\in\R^{n\times n}$, so that the diagonal entries of the commuting geodesic $K_t$ in the basis $R\ot Q$ are given by the entrywise squares of $H_t$.
	
	Define the square-root geodesic departure modulus
	\begin{equation}
		\label{eq:departuremodulusdef}
		\delta_{\mathrm{geo}}(t)
		:=
		\inf_{x\in\R_{++}^n,\ y\in\R_{++}^n}
		\norm{H_t-xy^\top}_F .
	\end{equation}
	\begin{enumerate}
		\item[(i)] One has
		\begin{equation}
			\label{eq:departuremodulussigma}
			\delta_{\mathrm{geo}}(t)=\sigma_2(H_t),
		\end{equation}
		where $\sigma_2(H_t)$ is the second singular value of $H_t$.
		
		\item[(ii)] Set $A:=\norm{a}^2$, $C:=\norm{c}^2$, $\rho:=\ip{a}{c}$, $B:=\norm{b}^2$, $D:=\norm{d}^2$, and $\sigma:=\ip{b}{d}$. Then
		\begin{equation}
			\label{eq:departuremodulusclosedform}
			\delta_{\mathrm{geo}}(t)^2
			=
			\frac12\Bigl(T(t)-\sqrt{T(t)^2-4\Delta(t)}\Bigr),
		\end{equation}
		where
		\begin{equation}
			\label{eq:TtDeparture}
			T(t)
			=
			(1-t)^2AB+2t(1-t)\rho\sigma+t^2CD
		\end{equation}
		and
		\begin{equation}
			\label{eq:DeltatDeparture}
			\Delta(t)
			=
			t^2(1-t)^2\,(AC-\rho^2)(BD-\sigma^2).
		\end{equation}
		
		\item[(iii)] For $t\in(0,1)$, $\delta_{\mathrm{geo}}(t)=0$ if and only if $K_0$ and $K_1$ lie on a common factor leaf. In particular, if the endpoints lie on no common factor leaf, then $\delta_{\mathrm{geo}}(t)>0$ for all $t\in(0,1)$.
		
		\item[(iv)] As $t\to0$,
		\begin{equation}
			\label{eq:departuremodulusasymptotic}
			\delta_{\mathrm{geo}}(t)^2
			=
			t^2\,\frac{(AC-\rho^2)(BD-\sigma^2)}{AB}
			+O(t^3).
		\end{equation}
	\end{enumerate}
	
	For the diagonal profile $M_t:=[h_{pq}(t)^2]_{p,q=1}^n=H_t\circ H_t$, define
	\begin{equation}
		\label{eq:covdepartdef}
		\delta_{\mathrm{diag}}(t)
		:=
		\inf_{x\in\R_{++}^n,\ y\in\R_{++}^n}
		\norm{M_t-xy^\top}_F .
	\end{equation}
	Then
	\begin{equation}
		\label{eq:covdepartsigma}
		\delta_{\mathrm{diag}}(t)^2=\sum_{j\ge 2}\sigma_j(M_t)^2
		=\sigma_2(M_t)^2+\sigma_3(M_t)^2,
	\end{equation}
	as $\operatorname{rank}(M_t)\le 3$. More explicitly, with
	\[
	p_1:=a\circ a,\quad p_2:=a\circ c,\quad p_3:=c\circ c,
	\qquad
	q_1:=b\circ b,\quad q_2:=b\circ d,\quad q_3:=d\circ d,
	\]
	\[
	P_t:=\bigl[(1-t)p_1,\ \sqrt{2t(1-t)}\,p_2,\ t p_3\bigr],
	\qquad
	Q_t:=\bigl[(1-t)q_1,\ \sqrt{2t(1-t)}\,q_2,\ t q_3\bigr],
	\]
	one has
	\begin{equation}
		\label{eq:Mtfactorization}
		M_t=P_tQ_t^\top .
	\end{equation}
	If $\lambda_1(t)\ge\lambda_2(t)\ge\lambda_3(t)\ge0$ are the eigenvalues of
	\begin{equation}
		\label{eq:Gram3x3}
		\Gamma_t:=(P_t^\top P_t)^{1/2}(Q_t^\top Q_t)(P_t^\top P_t)^{1/2},
	\end{equation}
	then
	\begin{equation}
		\label{eq:covdepart3x3}
		\delta_{\mathrm{diag}}(t)^2=\lambda_2(t)+\lambda_3(t)=\tr(\Gamma_t)-\lambda_1(t).
	\end{equation}
	Moreover, for $t\in(0,1)$,
	\begin{equation}
		\label{eq:covdepartzero}
		\delta_{\mathrm{diag}}(t)=0
		\quad\Longleftrightarrow\quad
		K_0\text{ and }K_1\text{ lie on a common factor leaf}.
	\end{equation}
\end{proposition}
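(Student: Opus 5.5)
The proof will use Eckart--Young together with the Perron--Frobenius theorem, and then reduce everything to Gram matrices of dimension at most three.

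\textbf{Part (i).} For any matrix $H$, Eckart--Young gives
\[
\inf_{\operatorname{rank}\le1}\norm{H-X}_F^2=\sum_{j\ge2}\sigma_j(H)^2 .
\]
Since $H_t=(1-t)ab^\top+t\,cd^\top$ has rank at most two, this infimum equals $\sigma_2(H_t)^2$. This already gives a lower bound for $\delta_{\mathrm{geo}}(t)$, because the constrained infimum is over a subset of the rank-$\le1$ matrices. For the reverse inequality:

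\begin{itemize}
\item For $t\in(0,1)$ the entries of $H_t$ are positive. By Perron--Frobenius applied to $H_tH_t^\top$ and $H_t^\top H_t$, the leading singular vectors can be chosen entrywise positive.
\item Hence the best rank-one approximation $\sigma_1uv^\top$ is of the form $xy^\top$ with $x,y\in\R_{++}^n$, and the infimum is attained.
\item At $t\in\{0,1\}$, $H_t$ is itself a positive rank-one matrix, so both sides vanish.
\end{itemize}

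\textbf{Part (ii).} I will compute the nonzero eigenvalues of $H_tH_t^\top$ through a $2\times2$ Gram reduction. Write $H_t=WZ^\top$ with
\[
W=[(1-t)a,\ t c],\qquad Z=[b,\ d].
\]
The nonzero eigenvalues of $H_tH_t^\top$ coincide with those of $(W^\top W)(Z^\top Z)$. Its trace is $T(t)$ and its determinant is
\[
\det(W^\top W)\det(Z^\top Z)=t^2(1-t)^2(AC-\rho^2)(BD-\sigma^2)=\Delta(t).
\]
The smaller root of $\lambda^2-T\lambda+\Delta=0$ is the stated expression.

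\textbf{Part (iii).} By (i), $\delta_{\mathrm{geo}}(t)=0$ exactly when $\operatorname{rank}H_t=1$. \Cref{thm:closurecriterion} (iii)$\Leftrightarrow$(iv) turns this into the common-leaf condition. Equivalently, $\Delta(t)=0$ exactly when Cauchy--Schwarz is tight, i.e.\ $a\parallel c$ or $b\parallel d$.

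\textbf{Part (iv).} Since $T(0)=AB>0$, I expand
\[
\tfrac12\bigl(T-\sqrt{T^2-4\Delta}\bigr)=\frac{\Delta}{T}+O(\Delta^2).
\]
With $\Delta=O(t^2)$ and $T=AB+O(t)$, this gives $\Delta/T=t^2(AC-\rho^2)(BD-\sigma^2)/(AB)+O(t^3)$.

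\textbf{The diagonal profile.} First I verify the factorization $M_t=P_tQ_t^\top$ entrywise by expanding
\[
h_{pq}^2=(1-t)^2a_p^2b_q^2+2t(1-t)a_pc_pb_qd_q+t^2c_p^2d_q^2,
\]
which shows $\operatorname{rank}M_t\le3$. Then I repeat the argument of part (i):
\begin{itemize}
\item Eckart--Young gives $\delta_{\mathrm{diag}}(t)^2\ge\sum_{j\ge2}\sigma_j(M_t)^2$.
\item For $t\in(0,1)$, $M_t$ is entrywise positive, so Perron--Frobenius makes the best rank-one approximation positive and gives equality.
\item At the endpoints $M_t$ is positive rank one.
\end{itemize}
For the $3\times3$ reduction, the nonzero eigenvalues of
\[
M_tM_t^\top=P_t(Q_t^\top Q_t)P_t^\top
\]
equal those of $(P_t^\top P_t)^{1/2}(Q_t^\top Q_t)(P_t^\top P_t)^{1/2}=\Gamma_t$, by the similarity $XY$ versus $YX$ with $X=P_t(P_t^\top P_t)^{-1/2}$ in the pseudo-inverse sense. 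I will use the general fact that $XY$ and $YX$ share nonzero spectrum, with $X=P_t(P_t^\top P_t)^{1/2}$ handled appropriately, or simply apply $\operatorname{spec}(P G P^\top)\setminus\{0\}=\operatorname{spec}(G^{1/2}... )$ directly. This yields $\sigma_j(M_t)^2=\lambda_j(t)$, hence \eqref{eq:covdepart3x3}.

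\textbf{The zero criterion and the main obstacle.} By the above, $\delta_{\mathrm{diag}}(t)=0$ iff $\operatorname{rank}M_t=1$. This is exactly the rank-one condition on $[h_{pq}^2]$ shown in the proof of \Cref{thm:closurecriterion} to be equivalent to the common-leaf condition. The step needing the most care is the Perron argument: the constrained infimum over positive $x,y$ must equal the unconstrained rank-one infimum, and this requires that a best rank-one approximant can be taken positive even when $\sigma_1$ is not simple. This is handled by strict positivity of the entries for $t\in(0,1)$: it makes the Perron eigenvalue of $H_tH_t^\top$ simple and strictly larger than $\sigma_2^2$, so the top singular vectors are positive and the approximant is $xy^\top$ with $x,y$ positive.
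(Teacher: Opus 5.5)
Your proposal is correct and follows the paper's proof essentially step for step. The shared steps are: Eckart--Young combined with Perron--Frobenius for both moduli; the $2\times2$ Gram product $(W^\top W)(Z^\top Z)$ for the closed form and the $t\to0$ expansion; the explicit three-term factorization $M_t=P_tQ_t^\top$; and reduction of both vanishing criteria to the rank-one condition of \Cref{thm:closurecriterion}. (The paper phrases (iii) through $\Delta(t)=0$ and equality in Cauchy--Schwarz, which you also note.)

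The only place you are vaguer than necessary is the $3\times3$ step, where no pseudo-inverse or regularization is needed. Apply $\operatorname{spec}(XY)\setminus\{0\}=\operatorname{spec}(YX)\setminus\{0\}$ first to $X=P_t$, $Y=(Q_t^\top Q_t)P_t^\top$. Then apply it to $X=(Q_t^\top Q_t)(P_t^\top P_t)^{1/2}$, $Y=(P_t^\top P_t)^{1/2}$. This identifies the nonzero eigenvalues of $M_tM_t^\top$ with those of $\Gamma_t$, even when $P_t^\top P_t$ is singular.
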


\begin{proof}
	The matrix $H_t=(1-t)\,ab^\top+t\,cd^\top$ has positive entries and rank at most two. The Eckart--Young theorem \cite{EckartYoung1936} yields the unrestricted Frobenius error $\sigma_2(H_t)$. Since $H_tH_t^\top$ and $H_t^\top H_t$ are entrywise positive, Perron--Frobenius allows the leading singular vectors to be chosen positive. The optimal rank-one approximant is therefore admissible in \eqref{eq:departuremodulusdef}, which proves \eqref{eq:departuremodulussigma}.
	
	For the closed form, write
	\[
	H_t=U_tV_t^\top,
	\qquad
	U_t:=\bigl[(1-t)a,\ tc\bigr]\in\R^{n\times 2},
	\qquad
	V_t:=\bigl[b,\ d\bigr]\in\R^{n\times 2}.
	\]
	The nonzero squared singular values are the nonzero eigenvalues of $H_t^\top H_t=V_t(U_t^\top U_t)V_t^\top$, hence of $(U_t^\top U_t)(V_t^\top V_t)$. Equivalently, they are the eigenvalues of the symmetric positive semidefinite matrix $(U_t^\top U_t)^{1/2}(V_t^\top V_t)(U_t^\top U_t)^{1/2}$, which has the same nonzero spectrum. Here
	\[
	U_t^\top U_t
	=
	\begin{pmatrix}
		(1-t)^2A & t(1-t)\rho \\
		t(1-t)\rho & t^2C
	\end{pmatrix},
	\qquad
	V_t^\top V_t
	=
	\begin{pmatrix}
		B & \sigma\\
		\sigma & D
	\end{pmatrix}.
	\]
	The trace and determinant of this $2\times2$ product are
	\[
	\operatorname{tr}\bigl((U_t^\top U_t)(V_t^\top V_t)\bigr)
	=
	(1-t)^2AB+2t(1-t)\rho\sigma+t^2CD,
	\]
	and
	\[
	\det(U_t^\top U_t)\det(V_t^\top V_t)
	=
	t^2(1-t)^2(AC-\rho^2)(BD-\sigma^2),
	\]
	namely $T(t)$ and $\Delta(t)$. The smaller eigenvalue yields \eqref{eq:departuremodulusclosedform}. Since $T(t)=AB+O(t)$ and
	\[
	\Delta(t)=t^2(AC-\rho^2)(BD-\sigma^2)+O(t^3),
	\]
	the expansion
	\[
	\frac12\left(T(t)-\sqrt{T(t)^2-4\Delta(t)}\right)
	=\frac{\Delta(t)}{T(t)}+O(t^4)
	\]
	proves \eqref{eq:departuremodulusasymptotic}.
	
	For $t\in(0,1)$, $\delta_{\mathrm{geo}}(t)=0$ is equivalent to $\Delta(t)=0$, i.e., $AC-\rho^2=0$ or $BD-\sigma^2=0$. By equality in Cauchy--Schwarz, this means that $a,c$ are collinear or $b,d$ are collinear, which is equivalent to $U_1=\lambda U_0$ or $V_1=\mu V_0$ for positive scalars. Since $\det U_0=\det U_1=1$, the first alternative gives $\lambda=1$. Thus this is exactly the factor-leaf condition in \Cref{thm:closurecriterion}, and proves the vanishing statement.
	
	For the diagonal profile,
	\[
	M_t=[h_{pq}(t)^2]_{p,q}
	=(1-t)^2(a\circ a)(b\circ b)^\top
	+2t(1-t)(a\circ c)(b\circ d)^\top
	+t^2(c\circ c)(d\circ d)^\top.
	\]
	Equation \eqref{eq:Mtfactorization} gives $\operatorname{rank}(M_t)\le3$. The same Eckart--Young/Perron--Frobenius argument gives \eqref{eq:covdepartsigma}. Set $A_t=P_t^\top P_t$ and $B_t=Q_t^\top Q_t$. From $M_t=P_tQ_t^\top$, the nonzero eigenvalues of $M_t^\top M_t$ coincide with those of $Q_tA_tQ_t^\top$, hence also with those of $A_tB_t$. These are the eigenvalues of $\Gamma_t=A_t^{1/2}B_tA_t^{1/2}$. For singular $A_t$, apply it to $A_t+\varepsilon I_3$ and let $\varepsilon\downarrow0$. This proves \eqref{eq:covdepart3x3}.
	
	Finally, $\delta_{\mathrm{diag}}(t)=0$ is equivalent to $\operatorname{rank}(M_t)=1$. With positive entries, this is equivalent to $\operatorname{rank}(H_t)=1$: if $M_t=xy^\top$, then
	$H_t=[\sqrt{x_p y_q}]_{p,q}=(\sqrt{x})(\sqrt{y})^\top$, and the converse follows from $M_t=H_t\circ H_t$. \Cref{thm:closurecriterion} completes the proof of \eqref{eq:covdepartzero}.
\end{proof}

\begin{remark}
	\label{rem:departuremodulus}
	The moduli $\delta_{\mathrm{geo}}(t)$ and $\delta_{\mathrm{diag}}(t)$ are fixed-chart departure moduli, not ambient distances to $\mcK_n$. Their tractability comes from the bounds $\operatorname{rank}(H_t)\le 2$ and $\operatorname{rank}(H_t\circ H_t)\le 3$, and both vanish precisely in the factor-leaf cases of \Cref{thm:closurecriterion}.
\end{remark}

\subsection{Endpoint tangency and rigidity}
The fixed-chart closure theorem gives an explicit rank-one description under simultaneous diagonalization. For arbitrary Kronecker endpoints, the endpoint calculation is more effective than a diagnostic alone: the partial-trace residual first gives a tangency test, and then the endpoint structure forces a factor-leaf conclusion.

\begin{lemma}
	\label{lem:partialtraceresidual}
	For $Z\in\Sym^{n^2}$, define
	\begin{equation}
		\label{eq:Pioperator}
		\Pi(Z)
		:=
		Z-\frac1n I_n\ot \operatorname{tr}_1(Z)-\frac1n \operatorname{tr}_2(Z)\ot I_n
		+\frac{\tr(Z)}{n^2}I_{n^2}.
	\end{equation}
	Let
	\[
	\mathcal T_{\otimes}:=\{I_n\ot A+B\ot I_n:A,B\in\Sym^n,\ \tr(A)=0\}.
	\]
	Then $Z-\Pi(Z)\in\mathcal T_{\otimes}$ and $\Pi(Z)$ is Frobenius-orthogonal to $\mathcal T_{\otimes}$. In particular,
	\begin{equation}
		\label{eq:Pikernel}
		\ker\Pi=\mathcal T_{\otimes}.
	\end{equation}
	More explicitly, $\Pi(Z)=0$ if and only if
	\[
	Z=I_n\ot A+B\ot I_n,
	\qquad
	A=\frac1n\operatorname{tr}_1(Z)-\frac{\tr(Z)}{n^2}I_n,\quad
	B=\frac1n\operatorname{tr}_2(Z),
	\]
	with $\tr(A)=0$.
\end{lemma}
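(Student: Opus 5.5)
The plan is to show that $\Pi$ is the Frobenius-orthogonal projection onto $\mathcal T_\otimes^\perp$. To do this, I will verify two facts directly: $Z-\Pi(Z)$ lies in $\mathcal T_\otimes$, and $\Pi(Z)$ is orthogonal to every element $I_n\ot A+B\ot I_n$ with $\tr A=0$. The kernel identity \eqref{eq:Pikernel} then follows immediately. If $\Pi(Z)=0$, then $Z=Z-\Pi(Z)\in\mathcal T_\otimes$. Conversely, if $Z\in\mathcal T_\otimes$, then $\Pi(Z)=Z-(Z-\Pi(Z))$ lies in $\mathcal T_\otimes$ and is orthogonal to it, hence vanishes.

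For membership, I rewrite the definition as
\[
Z-\Pi(Z)=I_n\ot\Bigl(\tfrac1n\operatorname{tr}_1(Z)-\tfrac{\tr Z}{n^2}I_n\Bigr)+\tfrac1n\operatorname{tr}_2(Z)\ot I_n .
\]
This has the required form with $A=\frac1n\operatorname{tr}_1(Z)-\frac{\tr(Z)}{n^2}I_n$ and $B=\frac1n\operatorname{tr}_2(Z)$. Both are symmetric, since partial traces of a symmetric matrix are symmetric. Moreover $\tr A=\frac1n\tr Z-\frac1n\tr Z=0$, because $\tr\operatorname{tr}_1(Z)=\tr Z$. These are exactly the explicit $A,B$ in the statement.

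For orthogonality, I use the duality identities
\[
\ip{Z}{I_n\ot A}_F=\ip{\operatorname{tr}_1(Z)}{A}_F,\qquad \ip{Z}{B\ot I_n}_F=\ip{\operatorname{tr}_2(Z)}{B}_F .
\]
Both follow from the block convention: $I_n\ot A$ is block-diagonal with blocks $A$, and $B\ot I_n$ has $(q,r)$ block $b_{qr}I_n$. I also need the partial traces of the correction terms:
\[
\operatorname{tr}_1(I_n\ot X)=nX,\quad \operatorname{tr}_2(I_n\ot X)=\tr(X)I_n,\quad \operatorname{tr}_1(Y\ot I_n)=\tr(Y)I_n,\quad \operatorname{tr}_2(Y\ot I_n)=nY,
\]
together with $\operatorname{tr}_i(I_{n^2})=nI_n$. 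From these one gets $\operatorname{tr}_1(\Pi(Z))=\operatorname{tr}_1 Z-\operatorname{tr}_1 Z-\frac{\tr Z}{n}I_n+\frac{\tr Z}{n}I_n=0$. In the same way, $\operatorname{tr}_2(\Pi(Z))=\operatorname{tr}_2Z-\frac{\tr Z}{n}I_n-\operatorname{tr}_2Z+\frac{\tr Z}{n}I_n=0$. Hence $\ip{\Pi(Z)}{I_n\ot A+B\ot I_n}_F=0$ for all $A,B$, and in fact without needing $\tr A=0$.

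The only step requiring care is the bookkeeping of the partial-trace identities under the paper's block convention, in particular $\operatorname{tr}_2(I_n\ot X)=\tr(X)I_n$ versus $\operatorname{tr}_1(Y\ot I_n)=\tr(Y)I_n$. I will check these on elementary tensors $V\ot U$ using $\operatorname{tr}_1(V\ot U)=\tr(V)U$ and $\operatorname{tr}_2(V\ot U)=\tr(U)V$, which the paper already records, and then extend by linearity. Once these identities are in place, the rest of the argument is immediate.
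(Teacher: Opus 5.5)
Your proof is correct and is essentially the paper's argument. It uses the same rearrangement to get $Z-\Pi(Z)=I_n\ot A+B\ot I_n\in\mathcal T_{\otimes}$ with the stated $A,B$, and the same orthogonality mechanism, namely $\operatorname{tr}_1(\Pi(Z))=\operatorname{tr}_2(\Pi(Z))=0$ fed into the partial-trace duality pairings (you verify these vanishing identities more explicitly than the paper does). The only difference is that you obtain $\mathcal T_{\otimes}\subseteq\ker\Pi$ from $\mathcal T_{\otimes}\cap\mathcal T_{\otimes}^{\perp}=\{0\}$, whereas the paper computes $\operatorname{tr}_1$, $\operatorname{tr}_2$ and $\tr$ of $I_n\ot A+B\ot I_n$ directly; both routes are valid.
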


\begin{proof}
	If $Z=I_n\ot A+B\ot I_n$ and $\tr(A)=0$, then
	\[
	\operatorname{tr}_1(Z)=nA+\tr(B)I_n,
	\qquad
	\operatorname{tr}_2(Z)=nB,
	\qquad
	\tr(Z)=n\,\tr(B),
	\]
	so $\Pi(Z)=0$. Conversely, $\Pi(Z)=0$ implies
	\[
	Z
	=
	\frac1n I_n\ot \operatorname{tr}_1(Z)+\frac1n \operatorname{tr}_2(Z)\ot I_n
	-\frac{\tr(Z)}{n^2}I_{n^2}
	=
	I_n\ot A+B\ot I_n,
	\]
	where
	\[
	A:=\frac1n\operatorname{tr}_1(Z)-\frac{\tr(Z)}{n^2}I_n,
	\qquad
	B:=\frac1n\operatorname{tr}_2(Z).
	\]
	Here $\tr(A)=0$ because $\tr(\operatorname{tr}_1(Z))=\tr(Z)$. Thus $\ker\Pi=\mathcal T_{\otimes}$.
	
	For general $Z$, the displayed formulas give $Z-\Pi(Z)=I_n\ot A+B\ot I_n\in\mathcal T_{\otimes}$. Moreover,
	\[
	\operatorname{tr}_1(\Pi(Z))=0,
	\qquad
	\operatorname{tr}_2(\Pi(Z))=0.
	\]
	Hence, for every $I_n\ot A+B\ot I_n\in\mathcal T_{\otimes}$,
	\[
	\ip{\Pi(Z)}{I_n\ot A+B\ot I_n}_F
	=
	\tr\bigl(A\,\operatorname{tr}_1(\Pi(Z))\bigr)
	+
	\tr\bigl(B\,\operatorname{tr}_2(\Pi(Z))\bigr)
	=0.
	\]
	This establishes the orthogonality statement.
\end{proof}

\begin{theorem}
	\label{thm:localobstruction}
	Let $K_i=V_i\ot U_i\in\mcK_n$, $i=0,1$, and let $\gamma_{K_0\to K_1}$ denote the ambient Bures geodesic. Set
	\[
	T_{0\to1}:=
	K_0^{-1/2}\bigl(K_0^{1/2}K_1K_0^{1/2}\bigr)^{1/2}K_0^{-1/2}.
	\]
	If there exists $\varepsilon>0$ such that $\gamma_{K_0\to K_1}([0,\varepsilon))\subset\mcK_n$, then
	\begin{equation}
		\label{eq:initialvelocitycriterion}
		\dot\gamma_{K_0\to K_1}(0)
		=
		\bigl(T_{0\to1}-I_{n^2}\bigr)K_0+K_0\bigl(T_{0\to1}-I_{n^2}\bigr)
		\in T_{K_0}\mcK_n.
	\end{equation}
	This tangency condition is equivalent to the following whitened Kronecker-sum condition: with
	\begin{equation}
		\label{eq:Z0definition}
		Z_0
		:=
		\bigl(V_0^{-1/2}\ot U_0^{-1/2}\bigr)\,
		\dot\gamma_{K_0\to K_1}(0)\,
		\bigl(V_0^{-1/2}\ot U_0^{-1/2}\bigr),
	\end{equation}
	there exist $A,B\in\Sym^n$ with $\tr(A)=0$ such that
	\begin{equation}
		\label{eq:whitenedkroneckersum}
		Z_0=I_n\ot A+B\ot I_n.
	\end{equation}
	Equivalently,
	\begin{equation}
		\label{eq:Piobstruction}
		\Pi(Z_0)=0.
	\end{equation}
	Hence, if $\Pi(Z_0)\neq 0$, then the ambient Bures geodesic does not remain in $\mcK_n$ on any nontrivial initial interval starting from $K_0$.
\end{theorem}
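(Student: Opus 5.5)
The plan has three steps: obtain tangency from the curve lying in $\mcK_n$, whiten the tangent space, and then apply \Cref{lem:partialtraceresidual}.

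\textbf{Step 1: tangency.} By \Cref{lem:modelmanifold}, $\mcK_n$ is an embedded submanifold of $\Sp^{n^2}$. The ambient geodesic \eqref{eq:BuresGeoGeneral} is polynomial in $t$, so it is smooth. If $\gamma([0,\varepsilon))\subset\mcK_n$, embeddedness gives a smooth curve $(U_t,V_t)=\Phi^{-1}(\gamma(t))$ in $\mcM_n$ on $[0,\varepsilon)$. One can use the explicit continuous inverse from the proof of \Cref{lem:modelmanifold}; it is the restriction of a smooth map on a neighborhood, since $\operatorname{tr}_1$, $\operatorname{tr}_2$, $\det$ and powers are smooth. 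Differentiating $\gamma(t)=\Phi(U_t,V_t)$ one-sidedly at $t=0$ gives $\dot\gamma(0)=D\Phi_{(U_0,V_0)}[\dot U_0,\dot V_0]\in T_{K_0}\mcK_n$. The velocity formula comes from differentiating \eqref{eq:BuresGeoGeneral}: with $E(t)=(1-t)I+tT_{0\to1}$ we have $\gamma=EK_0E$ and $\dot E=T_{0\to1}-I$, hence $\dot\gamma(0)=(T_{0\to1}-I)K_0+K_0(T_{0\to1}-I)$.

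\textbf{Step 2: whitened description of the tangent space.} By \Cref{lem:modelmanifold}, $T_{K_0}\mcK_n=\{V_0\ot H_U+H_V\ot U_0:\ \tr(U_0^{-1}H_U)=0\}$. Conjugate by the invertible congruence $W:=V_0^{-1/2}\ot U_0^{-1/2}$. Using the mixed-product rule,
\[
W(V_0\ot H_U+H_V\ot U_0)W=I_n\ot U_0^{-1/2}H_UU_0^{-1/2}+V_0^{-1/2}H_VV_0^{-1/2}\ot I_n .
\]
Set $A=U_0^{-1/2}H_UU_0^{-1/2}$ and $B=V_0^{-1/2}H_VV_0^{-1/2}$. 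Then $\tr(A)=\tr(U_0^{-1}H_U)=0$. The correspondence $(H_U,H_V)\leftrightarrow(A,B)$ is a bijection of $\{\tr(U_0^{-1}H_U)=0\}\times\Sym^n$ onto $\{\tr A=0\}\times\Sym^n$. Hence $W\,T_{K_0}\mcK_n\,W=\mathcal T_\otimes$. Since $Z_0=W\dot\gamma(0)W$, the tangency condition \eqref{eq:initialvelocitycriterion} is equivalent to \eqref{eq:whitenedkroneckersum}.

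\textbf{Step 3: residual form and conclusion.} By \eqref{eq:Pikernel} in \Cref{lem:partialtraceresidual}, condition \eqref{eq:whitenedkroneckersum} is equivalent to $\Pi(Z_0)=0$. The final sentence of the theorem is the contrapositive of the chain of implications above.

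The main obstacle is the rigor of Step 1, namely turning "the curve lies in the submanifold on a half-open interval" into "the one-sided derivative lies in the tangent space." I would handle it with a local slice chart for the embedded submanifold, or equivalently with the smooth extension of $\Phi^{-1}$. The rest is bookkeeping with Kronecker identities.
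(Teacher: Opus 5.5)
Your proposal is correct and follows the paper's proof step for step. You differentiate $E(t)K_0E(t)$ at $t=0$, whiten the tangent-space description from \Cref{lem:modelmanifold} by $V_0^{-1/2}\ot U_0^{-1/2}$, and then invoke $\ker\Pi=\mathcal T_\otimes$ from \Cref{lem:partialtraceresidual}. Your one-sided tangency argument via a smooth extension of the explicit inverse also fills in a detail the paper only asserts; it works because $\operatorname{tr}_1$ and $\operatorname{tr}_2$ of a positive definite matrix are positive definite, so that extension is smooth on all of $\Sp^{n^2}$.
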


\begin{proof}
	If $\gamma_{K_0\to K_1}([0,\varepsilon))\subset\mcK_n$, its initial velocity lies in $T_{K_0}\mcK_n$. Differentiating
	\[
	\gamma_{K_0\to K_1}(t)
	=
	\bigl((1-t)I_{n^2}+tT_{0\to1}\bigr)K_0\bigl((1-t)I_{n^2}+tT_{0\to1}\bigr)
	\]
	at $t=0$ yields \eqref{eq:initialvelocitycriterion}.
	
	The tangent-space formula in \Cref{lem:modelmanifold} gives
	\[
	\dot\gamma_{K_0\to K_1}(0)=V_0\ot H_U+H_V\ot U_0,
	\qquad
	\tr(U_0^{-1}H_U)=0.
	\]
	Whitening by $V_0^{-1/2}\ot U_0^{-1/2}$ gives
	\[
	Z_0
	=
	I_n\ot(U_0^{-1/2}H_UU_0^{-1/2})
	+
	(V_0^{-1/2}H_VV_0^{-1/2})\ot I_n.
	\]
	Thus \eqref{eq:whitenedkroneckersum} holds with
	\[
	A:=U_0^{-1/2}H_UU_0^{-1/2},
	\qquad
	B:=V_0^{-1/2}H_VV_0^{-1/2},
	\]
	and $\tr(A)=0$. Reversing the whitening proves the converse implication, so \eqref{eq:whitenedkroneckersum} is precisely the whitened form of tangency. By \Cref{lem:partialtraceresidual}, \eqref{eq:whitenedkroneckersum} is equivalent to \eqref{eq:Piobstruction}. The final assertion follows by contraposition.
\end{proof}

\begin{lemma}
	\label{lem:whitenedendpointformula}
	Let $K_i=V_i\ot U_i\in\mcK_n$, $i=0,1$, and define the factor Bures transports
	\[
	S_V:=V_0^{-1/2}(V_0^{1/2}V_1V_0^{1/2})^{1/2}V_0^{-1/2},
	\qquad
	S_U:=U_0^{-1/2}(U_0^{1/2}U_1U_0^{1/2})^{1/2}U_0^{-1/2}.
	\]
	Set
	\[
	P:=V_0^{-1/2}S_VV_0^{1/2},
	\qquad
	Q:=U_0^{-1/2}S_UU_0^{1/2}.
	\]
	Then the ambient transport factorizes as $T_{0\to1}=S_V\ot S_U$, and the whitened initial velocity in \Cref{thm:localobstruction} is
	\begin{equation}
		\label{eq:explicitZ0}
		Z_0=P\ot Q+P^\top\ot Q^\top-2I_{n^2}.
	\end{equation}
	Thus the endpoint residual $\Pi(Z_0)$ is computable from the two factor-size transports; $\norm{\Pi(Z_0)}_F=0$ is precisely the endpoint tangency condition.
\end{lemma}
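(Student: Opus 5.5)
The proof is a direct computation in three steps: factorize the ambient transport, differentiate the geodesic, and whiten the result.

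\emph{Step 1 (transport factorization).} Since $K_0^{1/2}=V_0^{1/2}\ot U_0^{1/2}$ and $K_0^{-1/2}=V_0^{-1/2}\ot U_0^{-1/2}$, the proof of \Cref{lem:pairwiseW2} gives
\[
K_0^{1/2}K_1K_0^{1/2}=(V_0^{1/2}V_1V_0^{1/2})\ot(U_0^{1/2}U_1U_0^{1/2}).
\]
The square root of a Kronecker product of positive definite matrices is the Kronecker product of the square roots. One way to see this is that $X^{1/2}\ot Y^{1/2}$ is positive definite and squares to $X\ot Y$, and positive definite square roots are unique. Sandwiching with $K_0^{-1/2}$ and using the mixed-product rule then yields $T_{0\to1}=S_V\ot S_U$.

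\emph{Step 2 (initial velocity).} By \eqref{eq:initialvelocitycriterion}, $\dot\gamma(0)=T K_0+K_0T-2K_0$ with $T=T_{0\to1}$. We conjugate by $W:=V_0^{-1/2}\ot U_0^{-1/2}$. Since $WK_0W=I_{n^2}$, the term $-2K_0$ becomes $-2I_{n^2}$. For the first term, write $WTK_0W=(WTW^{-1})(WK_0W)=WTW^{-1}$. Here $W^{-1}=V_0^{1/2}\ot U_0^{1/2}$, so
\[
WTW^{-1}=(V_0^{-1/2}S_VV_0^{1/2})\ot(U_0^{-1/2}S_UU_0^{1/2})=P\ot Q.
\]
Similarly, $WK_0TW=(WK_0W)(W^{-1}TW)=W^{-1}TW=(V_0^{1/2}S_VV_0^{-1/2})\ot(U_0^{1/2}S_UU_0^{-1/2})$. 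Because $S_V$, $S_U$, $V_0$, $U_0$ are symmetric, $V_0^{1/2}S_VV_0^{-1/2}=(V_0^{-1/2}S_VV_0^{1/2})^\top=P^\top$, and likewise for $Q^\top$. Summing the three terms gives \eqref{eq:explicitZ0}.

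\emph{Step 3 (final assertion).} The identity \eqref{eq:explicitZ0} expresses $Z_0$ through $n\times n$ matrices only. The partial traces of the Kronecker terms are explicit: for example, $\operatorname{tr}_1(P\ot Q)=\tr(P)Q$ and $\operatorname{tr}_2(P\ot Q)=\tr(Q)P$. Hence $\Pi(Z_0)$ is computable from $S_V$ and $S_U$. Finally, $\norm{\Pi(Z_0)}_F=0$ if and only if $\Pi(Z_0)=0$, which by \Cref{thm:localobstruction} is exactly the tangency condition.

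None of these steps is a real obstacle. The only point needing care is the transpose bookkeeping in Step 2, namely identifying $W^{-1}TW$ with $P^\top\ot Q^\top$ rather than $P\ot Q$. That identification rests on the symmetry of $S_V$ and $S_U$.
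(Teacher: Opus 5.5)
Your proposal is correct and follows essentially the same route as the paper: you factorize $T_{0\to1}=S_V\otimes S_U$ via the Kronecker square-root identity, whiten $(T_{0\to1}-I_{n^2})K_0+K_0(T_{0\to1}-I_{n^2})$ by $W=V_0^{-1/2}\otimes U_0^{-1/2}$, and use the symmetry of $S_V,S_U$ to identify the second term as $P^\top\otimes Q^\top$. Your version additionally spells out two points the paper leaves implicit: the uniqueness argument for the Kronecker square root, and the conjugation bookkeeping $WTW^{-1}=P\otimes Q$, $W^{-1}TW=P^\top\otimes Q^\top$.
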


\begin{proof}
	The Kronecker product square-root identity gives
	\[
	K_0^{1/2}K_1K_0^{1/2}
	=
	(V_0^{1/2}V_1V_0^{1/2})\ot(U_0^{1/2}U_1U_0^{1/2}),
	\]
	and hence $T_{0\to1}=S_V\ot S_U$. Using
	\[
	\dot\gamma_{K_0\to K_1}(0)
	=
	(T_{0\to1}-I_{n^2})K_0+K_0(T_{0\to1}-I_{n^2}),
	\]
	and whitening by $V_0^{-1/2}\ot U_0^{-1/2}$ gives
	\[
	Z_0
	=
	(V_0^{-1/2}S_VV_0^{1/2})\ot(U_0^{-1/2}S_UU_0^{1/2})
	+
	(V_0^{1/2}S_VV_0^{-1/2})\ot(U_0^{1/2}S_UU_0^{-1/2})
	-2I_{n^2}.
	\]
	Since $S_V$ and $S_U$ are symmetric, the second Kronecker product is $P^\top\ot Q^\top$, proving \eqref{eq:explicitZ0}. The final claim follows from \Cref{thm:localobstruction}.
\end{proof}

\begin{lemma}
	\label{lem:transportkroneckersumrigidity}
	Let $D_V=\diag(v_1,\dots,v_n)$ and $D_U=\diag(u_1,\dots,u_n)$ be positive diagonal matrices, let $S_V,S_U\in\Sp^n$, and set
	\[
	P:=D_V^{-1/2}S_VD_V^{1/2},
	\qquad
	Q:=D_U^{-1/2}S_UD_U^{1/2}.
	\]
	If
	\begin{equation}
		\label{eq:transportkroneckersum}
		P\ot Q+P^\top\ot Q^\top-2I_{n^2}=I_n\ot A+B\ot I_n
	\end{equation}
	for some $A,B\in\Sym^n$, then either $P=\lambda I_n$ or $Q=\mu I_n$ for some positive scalar.
\end{lemma}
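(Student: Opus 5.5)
The plan is to apply the residual operator $\Pi$ of \Cref{lem:partialtraceresidual} to both sides of \eqref{eq:transportkroneckersum}. By \eqref{eq:Pikernel} the right-hand side lies in $\ker\Pi$. Since $I_{n^2}=I_n\ot(0)+I_n\ot I_n$ with $\tr(0)=0$, it also lies in $\mathcal T_\otimes$, so $\Pi(I_{n^2})=0$. Hence the hypothesis reduces to
\[
\Pi\bigl(P\ot Q+P^\top\ot Q^\top\bigr)=0.
\]

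\textbf{Step 1: scalar/traceless splitting.} Write $P=pI_n+P'$ and $Q=qI_n+Q'$, where $p:=\tr(P)/n$, $q:=\tr(Q)/n$ and $\tr P'=\tr Q'=0$. Since $P$ is similar to $S_V\in\Sp^n$, we have $p=\tr(S_V)/n>0$; likewise $q>0$. Expanding gives
\[
P\ot Q=pq\,I_{n^2}+p\,I_n\ot Q'+q\,P'\ot I_n+P'\ot Q'.
\]
The first three terms lie in $\mathcal T_\otimes$, so $\Pi$ kills them. For the last term, $\operatorname{tr}_1(P'\ot Q')=\tr(P')Q'=0$, $\operatorname{tr}_2(P'\ot Q')=\tr(Q')P'=0$ and $\tr(P'\ot Q')=0$, so \eqref{eq:Pioperator} gives $\Pi(P'\ot Q')=P'\ot Q'$. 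The same computation applies to $P^\top\ot Q^\top$. The condition therefore becomes the tensor identity
\[
P'\ot Q'=-\,P'^\top\ot Q'^\top .
\]

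\textbf{Step 2: the tensor identity (main step).} Suppose, for contradiction, that $P'\neq0$ and $Q'\neq0$. A nonzero elementary tensor $X\ot Y$ determines its factors up to reciprocal scaling, by the same block comparison used in \Cref{lem:modelmanifold}. Hence $P'^\top=cP'$ and $Q'^\top=-c^{-1}Q'$ for some real $c\neq0$. Transposing twice forces $c^2=1$. So one of $P',Q'$ is symmetric and the other is skew-symmetric; say $Q'$ is skew-symmetric, the other case being identical with $P$ and $D_V$ in place of $Q$ and $D_U$.

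Skew-symmetry of $Q'$ means $Q+Q^\top=2qI_n$. Entrywise, $Q_{ij}=(S_U)_{ij}\sqrt{u_j/u_i}$. The diagonal equations give $(S_U)_{ii}=q$. The off-diagonal equations give $(S_U)_{ij}\bigl(\sqrt{u_j/u_i}+\sqrt{u_i/u_j}\bigr)=0$, and since the bracket is positive, $(S_U)_{ij}=0$. Thus $S_U=qI_n$, so $Q=qI_n$ and $Q'=0$, a contradiction.

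\textbf{Step 3: conclusion.} Therefore $P'=0$ or $Q'=0$, that is, $P=pI_n$ or $Q=qI_n$, and we already showed $p,q>0$.

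The main obstacle I expect is Step 2. It needs the factor-uniqueness of elementary tensors together with the observation that the positive diagonal similarity cannot turn a positive definite matrix into a nonscalar matrix with skew traceless part. This is exactly where the diagonal form of $D_U$ and $D_V$ is used.
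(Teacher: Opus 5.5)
Your proof is correct, and it takes a different route from the paper's.

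The paper works entrywise on the block equation. It first records the ``paired-sign'' fact: $p_{ij},p_{ji}$ (likewise $q_{k\ell},q_{\ell k}$) share a weak sign and vanish together. It then splits into three cases:
\begin{itemize}
\item $P$ has a nonzero off-diagonal entry, and the block $p_{ij}Q+p_{ji}Q^\top=b_{ij}I_n$ forces $Q$ to be scalar;
\item $P$ is diagonal but $Q$ is not, and the diagonal blocks force all $p_i$ to be equal;
\item both are diagonal, and vanishing mixed differences give $(p_i-p_j)(q_k-q_\ell)=0$.
\end{itemize}

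You instead apply $\Pi$ to kill the Kronecker-sum part, which isolates the interaction term and reduces the hypothesis to $P'\ot Q'+P'^\top\ot Q'^\top=0$. Uniqueness of elementary-tensor factors then gives a characterization valid for arbitrary real $P,Q$: the hypothesis holds exactly when one traceless part vanishes, or one is symmetric and the other skew. The structure $D^{-1/2}SD^{1/2}$ enters at a single point, namely that $Q+Q^\top$ scalar forces $S_U$ scalar. This is the same sign information the paper uses, but you invoke it once rather than case by case. Your reduction also absorbs the paper's both-diagonal case automatically, since diagonal traceless parts are symmetric. The paper's argument is more elementary and self-contained; yours makes the precise obstruction transparent.

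Two small points to tighten:
\begin{itemize}
\item The lemma does not assume $\tr A=0$. Note that $I_n\ot A+B\ot I_n=I_n\ot\bigl(A-\tfrac{\tr A}{n}I_n\bigr)+\bigl(B+\tfrac{\tr A}{n}I_n\bigr)\ot I_n\in\mathcal T_\otimes$ before using \eqref{eq:Pikernel}.
\item The pieces $p\,I_n\ot Q'$ and $q\,P'\ot I_n$ are not symmetric, so they are not literally in $\mathcal T_\otimes$. Either pair them with their transposes before invoking \Cref{lem:partialtraceresidual}, or observe that its kernel computation holds verbatim for the linear extension of $\Pi$ to nonsymmetric matrices.
\end{itemize}
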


\begin{proof}
	The proof begins with the sign information inherited from the symmetric positive-definite transports. Because $S_V$ and $S_U$ are symmetric,
	\[
	p_{ij}=s^V_{ij}\sqrt{v_j/v_i},\qquad
	p_{ji}=s^V_{ij}\sqrt{v_i/v_j},
	\]
	and similarly
	\[
	q_{k\ell}=s^U_{k\ell}\sqrt{u_\ell/u_k},\qquad
	q_{\ell k}=s^U_{k\ell}\sqrt{u_k/u_\ell}.
	\]
	It follows that each pair $p_{ij},p_{ji}$, and likewise $q_{k\ell},q_{\ell k}$, has the same weak sign and the two entries vanish simultaneously. Also, $P$ and $Q$ are similar to $S_V$ and $S_U$, respectively, so a scalar value of either matrix is necessarily positive.
	
	Suppose first that $P$ has a nonzero off-diagonal entry $p_{ij}$, $i\ne j$. The $(i,j)$ block of \eqref{eq:transportkroneckersum} gives
	\[
	p_{ij}Q+p_{ji}Q^\top=b_{ij}I_n .
	\]
	For $k\ne\ell$, the $(k,\ell)$ entry of this block identity is
	\[
	p_{ij}q_{k\ell}+p_{ji}q_{\ell k}=0 .
	\]
	The two summands have the same weak sign: $p_{ij}$ and $p_{ji}$ have the same nonzero sign, and $q_{k\ell}$ and $q_{\ell k}$ have the same weak sign. The sign agreement forces both summands to vanish. As $p_{ij}$ and $p_{ji}$ are nonzero, $q_{k\ell}=q_{\ell k}=0$ for every $k\ne\ell$, and $Q$ is diagonal. The diagonal entries in the same block identity satisfy
	\[
	(p_{ij}+p_{ji})q_{kk}=b_{ij},\qquad k=1,\dots,n.
	\]
	Here $p_{ij}+p_{ji}\ne0$, again by the common sign, so all $q_{kk}$ are equal and $Q=\mu I_n$ with $\mu>0$.
	
	It remains to consider the case where $P$ is diagonal, say $P=\diag(p_1,\dots,p_n)$. If $Q$ has a nonzero off-diagonal entry, then the off-diagonal $(k,\ell)$ entries of the diagonal block equations give
	\[
	p_i(q_{k\ell}+q_{\ell k})=a_{k\ell},\qquad i=1,\dots,n.
	\]
	The paired-sign property gives $q_{k\ell}+q_{\ell k}\ne0$. Since the right-hand side is independent of $i$, all $p_i$ are equal, and $P=\lambda I_n$ with $\lambda>0$.
	
	Finally, suppose that both $P=\diag(p_1,\dots,p_n)$ and $Q=\diag(q_1,\dots,q_n)$ are diagonal. The diagonal entries of the left side of \eqref{eq:transportkroneckersum} are $2(p_iq_k-1)$. Since the diagonal entries of $I_n\ot A+B\ot I_n$ have vanishing mixed differences,
	\[
	0=2(p_iq_k-1)-2(p_iq_\ell-1)-2(p_jq_k-1)+2(p_jq_\ell-1)
	=2(p_i-p_j)(q_k-q_\ell).
	\]
	Thus either all $p_i$ are equal or all $q_k$ are equal. Again the scalar is positive because $P$ and $Q$ are similar to positive definite matrices.
\end{proof}

\begin{theorem}
	\label{thm:endpointrigidity}
	Let $K_i=V_i\ot U_i\in\mcK_n$, $i=0,1$, with $\det U_0=\det U_1=1$, and let $Z_0$ be the whitened initial velocity in \Cref{thm:localobstruction}. The following are equivalent:
	\begin{enumerate}[label=(\roman*),leftmargin=2.4em]
		\item for some $\varepsilon>0$, $\gamma_{K_0\to K_1}([0,\varepsilon))\subset\mcK_n$;
		\item $\Pi(Z_0)=0$;
		\item $K_0$ and $K_1$ lie on a common factor leaf, that is, either $U_1=U_0$ or $V_1$ is a positive scalar multiple of $V_0$;
		\item $\gamma_{K_0\to K_1}([0,1])\subset\mcK_n$.
	\end{enumerate}
	Equivalently, for Kronecker endpoint pairs, endpoint-local Bures geodesic closure and whole-segment Bures geodesic closure occur precisely in the factor-leaf cases.
\end{theorem}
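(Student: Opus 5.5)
We close the cycle of implications (i)$\Rightarrow$(ii)$\Rightarrow$(iii)$\Rightarrow$(iv)$\Rightarrow$(i). The first and last are immediate: (i)$\Rightarrow$(ii) is exactly \Cref{thm:localobstruction}, and (iv)$\Rightarrow$(i) holds trivially with any $\varepsilon\le1$. For (iii)$\Rightarrow$(iv) we invoke \Cref{lem:geodesicallyclosedleaves}. If $U_1=U_0$, both endpoints lie in $\mathcal F(U_0)$. If $V_1=\mu V_0$, we write $K_1=V_0\ot(\mu U_1)$, so both endpoints lie in $\mathcal G(V_0)$. In either case the whole segment stays in $\mcK_n$.

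The substantive step is (ii)$\Rightarrow$(iii), which we reduce to \Cref{lem:transportkroneckersumrigidity}.

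First we diagonalize the base point. Write $V_0=R D_V R^\top$ and $U_0=QD_UQ^\top$ with $R,Q$ orthogonal. Conjugation by the orthogonal matrix $R\ot Q$ preserves the structure involved: it maps $I_n\ot A+B\ot I_n$ to $I_n\ot(Q^\top AQ)+(R^\top BR)\ot I_n$, it preserves traces, and it commutes with the partial traces up to the corresponding conjugations. Hence $\Pi(Z_0)=0$ is invariant under this change of basis.

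In the new basis, \Cref{lem:whitenedendpointformula} gives $Z_0$ in the form \eqref{eq:explicitZ0}. Here the matrices $P$ and $Q$ are built from $D_V,D_U$ and the conjugated transports $R^\top S_VR$ and $Q^\top S_UQ$, which are still in $\Sp^n$. Note that $V_0^{-1/2}S_VV_0^{1/2}$ conjugates to $D_V^{-1/2}(R^\top S_VR)D_V^{1/2}$, so the hypotheses of \Cref{lem:transportkroneckersumrigidity} are met exactly. By \Cref{lem:partialtraceresidual}, $\Pi(Z_0)=0$ yields the Kronecker-sum identity \eqref{eq:transportkroneckersum}. \Cref{lem:transportkroneckersumrigidity} then forces $P=\lambda I_n$ or $Q=\mu I_n$ with a positive scalar.

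We translate these two alternatives back to the factors.

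If $P=\lambda I_n$, then $S_V=\lambda I_n$. Since $V_1=S_VV_0S_V$ (the transport property), this gives $V_1=\lambda^2V_0$, the second factor-leaf case.

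If $Q=\mu I_n$, then similarly $U_1=\mu^2U_0$. Taking determinants and using $\det U_0=\det U_1=1$ gives $\mu^{2n}=1$, hence $\mu=1$ and $U_1=U_0$.

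Either way (iii) holds.

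The main obstacle is the bookkeeping in the basis change: verifying that $\Pi$ commutes with conjugation by $R\ot Q$, and that the conjugated $P,Q$ have exactly the form required by \Cref{lem:transportkroneckersumrigidity}. All of this follows from mixed-product identities, so once that is checked the rest is routine.
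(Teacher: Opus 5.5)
Your proposal is correct and follows the paper's proof essentially step for step. It closes the same cycle of implications, and it handles (ii)$\Rightarrow$(iii) the same way: a factorwise orthogonal change of basis to diagonal $U_0,V_0$, then \Cref{lem:partialtraceresidual,lem:whitenedendpointformula,lem:transportkroneckersumrigidity}, and finally the transport identities $V_1=S_VV_0S_V$, $U_1=S_UU_0S_U$ with the determinant normalization forcing $\mu=1$.
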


\begin{proof}
	The implication \textup{(i)}$\Rightarrow$\textup{(ii)} is \Cref{thm:localobstruction}, and \textup{(iii)}$\Rightarrow$\textup{(iv)} follows from \Cref{lem:geodesicallyclosedleaves}. The implication \textup{(iv)}$\Rightarrow$\textup{(i)} is immediate. It remains to prove \textup{(ii)}$\Rightarrow$\textup{(iii)}.
	
	The condition $\Pi(Z_0)=0$ is invariant under factorwise orthogonal changes of basis. For $\widetilde Z=(R^\top\ot Q^\top)Z(R\ot Q)$ with $R,Q\in O(n)$, the partial traces transform as $\operatorname{tr}_1(\widetilde Z)=Q^\top\operatorname{tr}_1(Z)Q$ and $\operatorname{tr}_2(\widetilde Z)=R^\top\operatorname{tr}_2(Z)R$, while $\tr(\widetilde Z)=\tr(Z)$. Thus $\Pi(\widetilde Z)=(R^\top\ot Q^\top)\Pi(Z)(R\ot Q)$. For the correspondingly transformed endpoints, the whitened velocity is precisely $(R^\top\ot Q^\top)Z_0(R\ot Q)$. It is therefore enough to assume that
	$U_0=\diag(u_1,\dots,u_n)$ and $V_0=\diag(v_1,\dots,v_n)$.
	
	Let $S_V,S_U,P,Q$ be as in \Cref{lem:whitenedendpointformula}. By \Cref{lem:partialtraceresidual,lem:whitenedendpointformula}, condition \textup{(ii)} gives
	\[
	P\ot Q+P^\top\ot Q^\top-2I_{n^2}=I_n\ot A+B\ot I_n
	\]
	for some $A,B\in\Sym^n$ with $\tr(A)=0$. The rigidity lemma \Cref{lem:transportkroneckersumrigidity} implies that either $P=\lambda I_n$ or $Q=\mu I_n$, with $\lambda,\mu>0$.
	
	If $P=\lambda I_n$, then $S_V=V_0^{1/2}PV_0^{-1/2}=\lambda I_n$, and 
	\[
	V_1=S_VV_0S_V=\lambda^2V_0.
	\]
	If $Q=\mu I_n$, then $S_U=\mu I_n$ and
	\[
	U_1=S_UU_0S_U=\mu^2U_0.
	\]
	The determinant normalization yields $\mu^{2n}=1$, hence $\mu=1$ and $U_1=U_0$. So the endpoints lie on a common factor leaf, which proves \textup{(iii)}.
\end{proof}

\begin{remark}
	\label{rem:localobstructionrigid}
	\Cref{thm:localobstruction} does not assume simultaneous diagonalization. The admissible whitened tangent space $\{I_n\ot A+B\ot I_n:A,B\in\Sym^n,\ \tr(A)=0\}$ has dimension $n(n+1)-1$, while $\Sym^{n^2}$ has dimension $n^2(n^2+1)/2$. \Cref{thm:endpointrigidity} shows that, among velocities generated by Kronecker endpoints through the ambient Bures geodesic, this first-order condition is rigid rather than merely infinitesimal.
\end{remark}

\begin{corollary}
	\label{cor:2x2obstruction}
	Assume $n=2$, and let $Z_0=[z_{ij}]_{i,j=1}^4$ denote the whitened initial velocity from \Cref{thm:localobstruction}. Then $\Pi(Z_0)=0$ if and only if
	\begin{equation}
		\label{eq:2x2entryconditions}
		z_{14}=z_{23}=0,\qquad
		z_{12}=z_{34},\qquad
		z_{13}=z_{24},\qquad
		z_{11}-z_{22}=z_{33}-z_{44}.
	\end{equation}
	Equivalently, there exist real parameters $\alpha,\beta,\delta,\gamma,\varepsilon$ such that
	\begin{equation}
		\label{eq:2x2pattern}
		Z_0=
		\begin{pmatrix}
			\alpha+\gamma & \beta & \delta & 0\\
			\beta & -\alpha+\gamma & 0 & \delta\\
			\delta & 0 & \alpha+\varepsilon & \beta\\
			0 & \delta & \beta & -\alpha+\varepsilon
		\end{pmatrix}.
	\end{equation}
	Consequently, violation of any one relation in \eqref{eq:2x2entryconditions} rules out local geodesic closure at the endpoint, while satisfaction of all relations forces the endpoints onto a common factor leaf by \Cref{thm:endpointrigidity}.
\end{corollary}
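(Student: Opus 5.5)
The plan is to specialize \Cref{lem:partialtraceresidual} to $n=2$ and read off the kernel $\mathcal T_{\otimes}$ entrywise. Since $\ker\Pi=\mathcal T_{\otimes}$, we have $\Pi(Z_0)=0$ exactly when $Z_0=I_2\ot A+B\ot I_2$ for some $A,B\in\Sym^2$ with $\tr A=0$. I will first show that this set coincides with the five-parameter pattern \eqref{eq:2x2pattern}. Then I will show that the pattern is cut out of $\Sym^4$ by the relations \eqref{eq:2x2entryconditions}.

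For the first step, write $A=\begin{pmatrix}\alpha&\beta\\ \beta&-\alpha\end{pmatrix}$ and $B=\begin{pmatrix}\gamma&\delta\\ \delta&\varepsilon\end{pmatrix}$. Under the block convention, $I_2\ot A$ is block diagonal with two copies of $A$, and $B\ot I_2$ has $(q,r)$ block $b_{qr}I_2$. Adding these gives exactly the matrix in \eqref{eq:2x2pattern}. Conversely, every matrix of that form arises from such $A,B$, so the pattern set equals $\mathcal T_{\otimes}$. A dimension check confirms this: $\mathcal T_{\otimes}$ has dimension $n(n+1)-1=5$, and there are five free parameters.

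For the second step, a symmetric $4\times4$ matrix has $10$ free entries. The relations \eqref{eq:2x2entryconditions} are five independent linear constraints, so they define a $5$-dimensional subspace. Every matrix of the pattern clearly satisfies them. For the converse, I will solve the relations directly. Set $\beta:=z_{12}=z_{34}$ and $\delta:=z_{13}=z_{24}$. Put $\alpha:=(z_{11}-z_{22})/2=(z_{33}-z_{44})/2$, $\gamma:=(z_{11}+z_{22})/2$, and $\varepsilon:=(z_{33}+z_{44})/2$. With $z_{14}=z_{23}=0$, the matrix then has the displayed form. This gives both equivalences.

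For the consequence, note that any violated relation means $\Pi(Z_0)\ne0$, and \Cref{thm:localobstruction} then excludes local closure at the endpoint. If all relations hold, then $\Pi(Z_0)=0$, which is condition (ii) of \Cref{thm:endpointrigidity}, so the endpoints lie on a common factor leaf. None of these steps is difficult. The only real care needed is to keep the block-index convention for $B\ot I_2$ straight, so that the $\delta$ entries sit at positions $(1,3)$ and $(2,4)$ rather than $(1,2)$.
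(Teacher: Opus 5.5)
Your proposal is correct and follows the paper's proof: you specialize \Cref{lem:partialtraceresidual} to $n=2$ with the same traceless $A$ and symmetric $B$, expand $I_2\ot A+B\ot I_2$ to obtain the pattern, and invert the relations to recover the parameters; you then invoke \Cref{thm:localobstruction} and \Cref{thm:endpointrigidity} for the consequences. Your explicit formulas $\alpha=(z_{11}-z_{22})/2$, $\gamma=(z_{11}+z_{22})/2$, $\varepsilon=(z_{33}+z_{44})/2$ make concrete the paper's remark that the identities determine the parameters uniquely, using, as both arguments implicitly do, that $Z_0$ is symmetric.
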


\begin{proof}
	By \Cref{lem:partialtraceresidual}, $\Pi(Z_0)=0$ is equivalent to $Z_0=I_2\ot A+B\ot I_2$ with
	\[
	A=\begin{pmatrix}\alpha&\beta\\ \beta&-\alpha\end{pmatrix},
	\qquad
	B=\begin{pmatrix}\gamma&\delta\\ \delta&\varepsilon\end{pmatrix}.
	\]
	Expanding yields \eqref{eq:2x2pattern}. Conversely, the identities in \eqref{eq:2x2entryconditions} determine such parameters uniquely.
\end{proof}

\begin{example}
	\label{ex:2x2noncommutingexample}
	Consider
	\[
	U_0=\begin{pmatrix}2&0\\0&1/2\end{pmatrix},
	\qquad
	V_0=\begin{pmatrix}4&0\\0&1\end{pmatrix},
	\]
	\[
	U_1=\frac14\begin{pmatrix}5&3\\3&5\end{pmatrix},
	\qquad
	V_1=\begin{pmatrix}1&0\\0&4\end{pmatrix},
	\]
	and set $K_i=V_i\ot U_i\in\mcK_2$. Then $U_0U_1\neq U_1U_0$, so that $K_0$ and $K_1$ fail to commute. Moreover $U_1\neq U_0$ and $V_1$ is not a scalar multiple of $V_0$, so the endpoints lie on no common factor leaf.
	
	Let $T_{0\to1}$ be the Bures transport from $K_0$ to $K_1$, and let $Z_0$ be the whitened initial velocity from \Cref{thm:localobstruction}. A direct calculation gives $z_{12}=15/(4\sqrt{82})\neq 15/\sqrt{82}=z_{34}$, so the $2\times2$ tangency condition from \Cref{cor:2x2obstruction} fails. Equivalently,
	$\Pi(Z_0)\ne0$. Hence the ambient Bures geodesic from $K_0$ to $K_1$ cannot remain in $\mcK_2$ on any initial interval $[0,\varepsilon)$.
\end{example}

\section{Restricted barycenter problems}
\label{sec:barycenters}

Exact barycenter formulas are available in two restricted domains. The first is a fixed commuting coordinate slice, where square-root coordinates convert the determinant-normalized objective into a Rayleigh-quotient maximization for an entrywise positive matrix. The second is a factor leaf, where the problem is globally equivalent, within that leaf, to the standard Bures--Wasserstein barycenter problem on $\Sp^n$. No global formula on all of $\mcK_n$ is asserted here.

For matrices $K_1,\dots,K_N\in\Sp^{n^2}$ and weights $w_i>0$ with $\sum_i w_i=1$, define $\mcJ(K):=\sum_{i=1}^N w_i\,d_{\mathrm B}^2(K,K_i)$. On the determinant-normalized Kronecker model, we write $\mcJ(U,V):=\mcJ(V\ot U)$.

\subsection{Fixed commuting-coordinate slice}
Assume now that all $U_i$ are simultaneously diagonalizable in an orthogonal basis $Q$, and all $V_i$ are simultaneously diagonalizable in an orthogonal basis $R$. Write
\[
U_i=Q\diag(u_{i,1},\dots,u_{i,n})Q^\top,
\qquad
V_i=R\diag(v_{i,1},\dots,v_{i,n})R^\top.
\]
Restrict attention to
\[
U=Q\diag(x_1,\dots,x_n)Q^\top,
\qquad
V=R\diag(y_1,\dots,y_n)R^\top,
\]
with $x_p,y_q>0$ and $\prod_p x_p=1$.

The commuting chart is part of the data of the restricted problem. When repeated eigenvalues are present, simultaneous diagonalizing bases need not be unique, and different choices may describe different diagonal coordinate slices. The result below optimizes only over candidates diagonal in the fixed bases $Q$ and $R$.

\begin{lemma}
	\label{lem:commutingform}
	In the above notation, the objective restricted to the fixed commuting-coordinate slice becomes
	\begin{equation}
		\label{eq:commutingobjectiveW2}
		\mcJ(x,y)
		=
		\Bigl(\sum_{p=1}^n x_p\Bigr)\Bigl(\sum_{q=1}^n y_q\Bigr)
		+
		\sum_{i=1}^N w_i\,\tr(U_i)\tr(V_i)
		-2\sum_{p=1}^n\sum_{q=1}^n c_{pq}\sqrt{x_p y_q},
	\end{equation}
	where $c_{pq}:=\sum_{i=1}^N w_i\sqrt{u_{i,p}v_{i,q}}$.
\end{lemma}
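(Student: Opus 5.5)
The plan is to apply \Cref{lem:pairwiseW2} to each pair $(V\ot U, V_i\ot U_i)$ and then average with the weights $w_i$. For a fixed $i$, I would compute the two factor traces and the two spectra that enter \eqref{eq:pairwiseBuresKronecker}, using the fact that on the slice every matrix in sight is diagonal in the fixed bases.

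\emph{The trace terms.} Since $U=Q\diag(x)Q^\top$ and $V=R\diag(y)R^\top$, we have $\tr(U)=\sum_p x_p$ and $\tr(V)=\sum_q y_q$. So the first term of \eqref{eq:pairwiseBuresKronecker} is $\bigl(\sum_p x_p\bigr)\bigl(\sum_q y_q\bigr)$. This term does not depend on $i$, and because $\sum_i w_i=1$ it survives the weighted average unchanged. The second term $\tr(U_i)\tr(V_i)$ contributes $\sum_i w_i\tr(U_i)\tr(V_i)$ after averaging.

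\emph{The spectra.} Take $U$ in the role of $U_0$ and $U_i$ in the role of $U_1$, and similarly for $V$. Both $U$ and $U_i$ are diagonalized by the same $Q$, so $U^{1/2}=Q\diag(\sqrt{x})Q^\top$ and
\[
U^{1/2}U_iU^{1/2}=Q\diag(x_pu_{i,p})Q^\top .
\]
Its eigenvalues are therefore $\beta_p=x_pu_{i,p}$. In the same way, the eigenvalues of $V^{1/2}V_iV^{1/2}$ are $\alpha_q=y_qv_{i,q}$.

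The pairing in \Cref{lem:pairwiseW2} runs over all index pairs, so the cross term becomes
\[
\sum_{p,q}\sqrt{\alpha_q\beta_p}=\sum_{p,q}\sqrt{u_{i,p}v_{i,q}}\,\sqrt{x_py_q}.
\]
Here I will be careful about which index belongs to which factor, but the double sum is symmetric in that respect, so no actual difficulty arises. Multiplying by $w_i$, summing over $i$, and exchanging the finite sums gives $-2\sum_{p,q}c_{pq}\sqrt{x_py_q}$ with $c_{pq}=\sum_i w_i\sqrt{u_{i,p}v_{i,q}}$.

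The only point needing care, and hence the main obstacle, is the simultaneous diagonalization step. The identity for $U^{1/2}U_iU^{1/2}$ relies on $U$ and $U_i$ sharing the basis $Q$. This holds by the definition of the slice, since candidates are restricted to be diagonal in the fixed $Q$ and $R$. The determinant constraint $\prod_p x_p=1$ plays no role in the identity; it only records that the candidates lie in $\mcK_n$. Collecting the three contributions yields \eqref{eq:commutingobjectiveW2}.
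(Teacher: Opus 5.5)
Your proposal is correct and follows essentially the same route as the paper: apply the pairwise Kronecker reduction to each pair $(V\ot U,\,V_i\ot U_i)$, read off the traces and the spectra $x_pu_{i,p}$ and $y_qv_{i,q}$ in the fixed common bases, then average with the weights using $\sum_i w_i=1$. The only difference is that the paper writes the cross term as the product $\tr\bigl((V^{1/2}V_iV^{1/2})^{1/2}\bigr)\tr\bigl((U^{1/2}U_iU^{1/2})^{1/2}\bigr)$ instead of a double sum over eigenvalue pairs, and these are the same identity.
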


\begin{proof}
	\Cref{lem:pairwiseW2} gives, for each $i$,
	\[
	d_{\mathrm B}^2(V\ot U,V_i\ot U_i)
	=\tr(V)\tr(U)+\tr(V_i)\tr(U_i)-2\tr\bigl((V^{1/2}V_iV^{1/2})^{1/2}\bigr)\tr\bigl((U^{1/2}U_iU^{1/2})^{1/2}\bigr).
	\]
	In the fixed commuting coordinate slice, $\tr(U)=\sum_{p=1}^n x_p$ and $\tr(V)=\sum_{q=1}^n y_q$, while
	\[
	\tr\bigl((U^{1/2}U_iU^{1/2})^{1/2}\bigr)=\sum_{p=1}^n \sqrt{x_pu_{i,p}},
	\qquad
	\tr\bigl((V^{1/2}V_iV^{1/2})^{1/2}\bigr)=\sum_{q=1}^n \sqrt{y_qv_{i,q}}.
	\]
	Summing with weights $w_i$ yields
	\[
	\mcJ(x,y)
	=\Bigl(\sum_{p=1}^n x_p\Bigr)\Bigl(\sum_{q=1}^n y_q\Bigr)
	+\sum_{i=1}^N w_i\,\tr(U_i)\tr(V_i)
	-2\sum_{i=1}^N w_i\sum_{p=1}^n\sum_{q=1}^n \sqrt{x_py_q}\sqrt{u_{i,p}v_{i,q}}.
	\]
	The definition of $c_{pq}$ gives \eqref{eq:commutingobjectiveW2}.
\end{proof}

Even in one fixed commuting coordinate slice, the determinant normalization $\prod_p x_p=1$ is a nonlinear constraint in factor coordinates. The slice minimizer therefore differs from the unconstrained tensor-product mean identity: the Perron singular vectors arise from a normalized Rayleigh quotient.

\begin{remark}
	\label{rem:blockwisecaution}
	For fixed $y\in\R_{++}^n$, the expression in \eqref{eq:commutingobjectiveW2} is strictly convex as a function of $x\in\R_{++}^n$, and the same blockwise statement holds after fixing $x$. The determinant constraint $\prod_p x_p=1$ is not a Euclidean convex constraint, so uniqueness in \Cref{thm:commutingexact} is obtained instead from the Rayleigh quotient for the entrywise positive matrix $CC^\top$ and the Perron--Frobenius theorem.
\end{remark}

\begin{theorem}
	\label{thm:commutingexact}
	Assume the notation of \Cref{lem:commutingform}. Let $C=(c_{pq})\in\R_{++}^{n\times n}$ and $\sigma_1:=\sigma_{\max}(C)$.
	Choose the positive singular vector pair $u_1,v_1\in\R_{++}^n$ such that
	\[
	Cv_1=\sigma_1 u_1,
	\qquad
	C^\top u_1=\sigma_1 v_1,
	\qquad
	\norm{u_1}_2=\norm{v_1}_2=1.
	\]
	Set $\alpha:=\bigl(\prod_{p=1}^n u_{1,p}\bigr)^{-1/n}$.
	Then the fixed-coordinate-slice optimization problem
	\begin{equation}
		\label{eq:commutingchartproblem}
		\min\Bigl\{\mcJ(x,y): x\in\R_{++}^n,\ y\in\R_{++}^n,\ \prod_{p=1}^n x_p=1\Bigr\}
	\end{equation}
	has the unique minimizer within this fixed coordinate slice
	\begin{equation}
		\label{eq:exactxy}
		x_p^\star=\alpha^2 u_{1,p}^2,
		\qquad
		y_q^\star=\frac{\sigma_1^2}{\alpha^2}v_{1,q}^2.
	\end{equation}
	Equivalently, the unique minimizer in the fixed coordinate slice is
	\[
	U_\star=Q\diag(x_1^\star,\dots,x_n^\star)Q^\top,
	\qquad
	V_\star=R\diag(y_1^\star,\dots,y_n^\star)R^\top.
	\]
	The fixed-coordinate-slice minimum value is
	\begin{equation}
		\label{eq:commutingminimumvalue}
		\min_{\textup{fixed slice}} \mcJ
		=
		\sum_{i=1}^N w_i\,\tr(U_i)\tr(V_i)-\sigma_1^2.
	\end{equation}
\end{theorem}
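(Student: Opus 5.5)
Starting from \Cref{lem:commutingform}, pass to square-root coordinates $r_p:=\sqrt{x_p}$, $s_q:=\sqrt{y_q}$. The slice problem becomes minimizing
\[
F(r,s)=\norm{r}_2^2\,\norm{s}_2^2-2\,r^\top C s+\mathrm{const},\qquad \mathrm{const}=\sum_i w_i\tr(U_i)\tr(V_i),
\]
over $r,s\in\R_{++}^n$ with $\prod_p r_p=1$.

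The key observation is that the constraint on $r$ is only a normalization of direction. For fixed directions $\hat r=r/\norm{r}$ and $\hat s=s/\norm{s}$, set $\rho:=\norm{r}\norm{s}$. The objective depends on the magnitudes only through $\rho$:
\[
F=\rho^2-2\rho\,\hat r^\top C\hat s+\mathrm{const}.
\]
Given a direction $\hat r$, any prescribed $\rho>0$ is attainable. The product constraint fixes $\norm{r}=(\prod_p\hat r_p)^{-1/n}$, and $\norm{s}$ is then free to produce the desired $\rho$.

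Minimizing over $\rho>0$ gives $\rho=\hat r^\top C\hat s$, which is positive because $C$ and the vectors are positive. The resulting value is $\mathrm{const}-(\hat r^\top C\hat s)^2$. Hence the problem reduces to maximizing the bilinear Rayleigh quotient $\hat r^\top C\hat s$ over unit positive vectors. Its supremum over all unit vectors is $\sigma_1$, attained only at $\pm(u_1,v_1)$ when $\sigma_1$ is simple. Since $C$ is entrywise positive, $CC^\top$ is entrywise positive, so Perron--Frobenius applies. It makes $\sigma_1^2$ a simple eigenvalue with positive eigenvector $u_1$, and $v_1=C^\top u_1/\sigma_1$ is also positive. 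The maximizer therefore lies in the admissible open orthant, and it is unique there: any unit pair achieving $\sigma_1$ must be a top singular pair, which forces $\hat r=\pm u_1$, and positivity picks the sign. This gives the minimum value $\mathrm{const}-\sigma_1^2$, which is \eqref{eq:commutingminimumvalue}.

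Next recover the minimizer. From $\hat r=u_1$, the constraint gives $\norm{r}=(\prod_p u_{1,p})^{-1/n}=\alpha$, so $r=\alpha u_1$ and $x_p^\star=\alpha^2u_{1,p}^2$. From $\rho=\sigma_1$ and $\hat s=v_1$ we get $\norm{s}=\sigma_1/\alpha$, so $y_q^\star=\sigma_1^2v_{1,q}^2/\alpha^2$. Both match \eqref{eq:exactxy}, and the diagonal-slice matrices $U_\star,V_\star$ follow directly.

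The main obstacle is justifying uniqueness of the full minimizer. I will argue that every minimizer must attain equality at each stage. Because the minimum value equals the supremum value $\mathrm{const}-\sigma_1^2$, any minimizer must satisfy $(\hat r^\top C\hat s)^2=\sigma_1^2$. Since $\hat r,\hat s$ are positive, this forces $\hat r^\top C\hat s=\sigma_1$, so $(\hat r,\hat s)=(u_1,v_1)$ by simplicity. It must also satisfy $\rho=\sigma_1$, the unique minimizer of the scalar quadratic in $\rho$. The constraint then determines $\norm{r}$, and so $(r,s)$ is determined. Existence requires no compactness argument, since the minimizer is exhibited explicitly.
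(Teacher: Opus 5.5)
Your proof is correct, but it reaches the top singular pair by a different reduction than the paper. The paper fixes the $U$-side square-root vector $z$ and eliminates the whole $V$-side vector: for fixed $z$, the map $r\mapsto\norm{z}_2^2\norm{r}_2^2-2z^\top Cr$ is a strictly convex quadratic with positive minimizer $C^\top z/\norm{z}_2^2$. What remains is the Rayleigh quotient $z^\top CC^\top z/z^\top z$. Because that quotient is scale invariant, the constraint $\prod_p z_p=1$ only selects the point $\alpha u_1$ on the Perron ray, and Perron--Frobenius for the symmetric positive matrix $CC^\top$ settles existence and uniqueness together.

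You instead split both vectors into unit directions and the single scalar $\rho=\norm{r}\norm{s}$, eliminate only $\rho$, and arrive at the bilinear characterization $\sigma_1=\max\hat r^\top C\hat s$. This treats the two factors symmetrically and shows directly why the determinant gauge costs nothing: it fixes $\norm{r}$ but leaves $\rho$ free. The price is two extra facts, both of which you supply correctly. First, the equality case of Cauchy--Schwarz is needed to conclude that a maximizing pair is a top singular pair. Second, $\sigma_1$ must be simple, which you obtain from Perron--Frobenius applied to $CC^\top$.

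Your uniqueness step is the equality case of $F-\mathrm{const}=(\rho-m)^2-m^2\ge-\sigma_1^2$, with $m=\hat r^\top C\hat s\le\sigma_1$. Combined with the bijectivity of $(r,s)\mapsto(\hat r,\hat s,\rho)$ on the feasible set, it is fully rigorous and arguably more transparent than the paper's two-stage argument (exact inner minimization plus a unique outer maximizer). The paper's route, in turn, gives the explicit best response $r^\star(z)$ for every feasible $z$, which is the natural alternating-minimization step.
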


\begin{proof}
	Introduce square-root coordinates $z_p:=\sqrt{x_p}$ and $r_q:=\sqrt{y_q}$, so that $z,r\in\R_{++}^n$ and the determinant normalization becomes $\prod_{p=1}^n z_p=1$.
	In these variables, \eqref{eq:commutingobjectiveW2} takes the form
	\begin{equation}
		\label{eq:Jzw}
		\mcJ(z,r)=\kappa+\norm{z}_2^2\norm{r}_2^2-2z^\top Cr,
	\end{equation}
	where $\kappa:=\sum_{i=1}^N w_i\,\tr(U_i)\tr(V_i)$ is constant.
	
	For fixed $z\in\R_{++}^n$, the $r$-part is a strictly convex quadratic, with $\nabla_r \mcJ(z,r)=2\norm{z}_2^2 r-2C^\top z$.
	The unique minimizer over $\R_{++}^n$ is
	\begin{equation}
		\label{eq:wstarz}
		r^\star(z)=\frac{C^\top z}{\norm{z}_2^2},
	\end{equation}
	which is positive since $C$ and $z$ are entrywise positive. Substitution produces
	\begin{equation}
		\label{eq:reducedRayleigh}
		\min_{r\in\R_{++}^n}\mcJ(z,r)
		=
		\kappa-\frac{z^\top CC^\top z}{z^\top z}.
	\end{equation}
	It remains to maximize the Rayleigh quotient of $S:=CC^\top$ over $\{z\in\R_{++}^n:\prod_p z_p=1\}$.
	
	The matrix $S$ is entrywise positive. Rayleigh--Ritz gives $z^\top Sz/(z^\top z)\le \sigma_1^2$, where $\sigma_1^2=\lambda_{\max}(S)$. Perron--Frobenius makes this eigenvalue simple and gives the unique positive unit eigenvector $u_1$. Equality holds only on the Perron ray, and the determinant constraint intersects that ray at $z^\star=\alpha u_1$, with $\alpha=(\prod_p u_{1,p})^{-1/n}$. This is the unique feasible maximizer.
	
	Using \eqref{eq:wstarz}, $r^\star=(C^\top z^\star)/\norm{z^\star}_2^2=(\sigma_1/\alpha)v_1$. Therefore $x_p^\star=(z_p^\star)^2=\alpha^2u_{1,p}^2$ and $y_q^\star=(r_q^\star)^2=\sigma_1^2v_{1,q}^2/\alpha^2$, which proves \eqref{eq:exactxy}. Substituting the maximal Rayleigh quotient value $\sigma_1^2$ in \eqref{eq:reducedRayleigh} produces \eqref{eq:commutingminimumvalue}. Since the minimization over $r$ is exact for each feasible $z$ and the Rayleigh quotient has a unique feasible maximizer, the minimizer in the fixed coordinate slice is unique.
\end{proof}

\begin{remark}
	The formula is exact and unique for the finite-dimensional problem in which both the data and the candidate are diagonal in the chosen commuting bases. It does not assert global optimality over all of $\mcK_n$ when candidates are allowed to leave that coordinate slice.
\end{remark}

\subsection{Leafwise barycenters}
The leafwise barycenter argument uses the following global scaling identity.

\begin{remark}
	\label{rem:pullbackmetric}
	The embedding $\Phi(U,V)=V\ot U$ pulls the ambient Bures metric back by
	\[
	\mathbf g_{(U,V)}\bigl((H_U,H_V),(G_U,G_V)\bigr)
	=
	g^{\mathrm B}_{V\ot U}\bigl(V\ot H_U+H_V\ot U,\,V\ot G_U+G_V\ot U\bigr),
	\]
	where $g_K^{\mathrm B}(H,G)=\frac12\ip{\mcL_K^{-1}(H)}{G}_F$ and $\mcL_K(X)=KX+XK$. The displayed pullback is obtained by substituting $D\Phi_{(U,V)}(H_U,H_V)=V\ot H_U+H_V\ot U$ into this standard Lyapunov formula for the Bures metric. At $(I_n,sI_n)$, the isotropic expression follows from $\mcL_{sI_{n^2}}(X)=2sX$ and $\tr(H_U)=0$:
	\[
	\mathbf g_{(I_n,sI_n)}\bigl((H_U,H_V),(H_U,H_V)\bigr)
	=
	\frac n4\left(s\norm{H_U}_F^2+\frac1s\norm{H_V}_F^2\right),
	\qquad \tr(H_U)=0.
	\]
\end{remark}

\begin{lemma}
	\label{lem:homotheticleaves}
	For fixed $U_\ast\in\Sp^n$ with $\det U_\ast=1$, the map $\Psi_{U_\ast}^{\mathrm{row}}(V\ot U_\ast)=V$ identifies $\mathcal F(U_\ast)$ with a homothetic copy of $(\Sp^n,d_{\mathrm B})$. More precisely,
	\[
	d_{\mathrm B}^2(V_0\ot U_\ast,V_1\ot U_\ast)
	=
	\tr(U_\ast)\,d_{\mathrm B}^2(V_0,V_1).
	\]
	For fixed $V_\ast\in\Sp^n$, the map $\Psi_{V_\ast}^{\mathrm{col}}(\alpha V_\ast\ot U)=\alpha U$ identifies $\mathcal G(V_\ast)$ with a homothetic copy of $(\Sp^n,d_{\mathrm B})$, and
	\[
	d_{\mathrm B}^2(\alpha_0V_\ast\ot U_0,\alpha_1V_\ast\ot U_1)
	=
	\tr(V_\ast)\,d_{\mathrm B}^2(\alpha_0U_0,\alpha_1U_1).
	\]
	On the isotropic leaves one recovers $\Psi_{\mathrm{row}}(\alpha I_n\ot U)=\alpha U$ and $\Psi_{\mathrm{col}}(V\ot I_n)=V$.
\end{lemma}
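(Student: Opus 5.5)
The plan is to read both distance identities directly off \Cref{lem:pairwiseW2}, with the second factor held fixed in each case, and then check that the maps $\Psi$ are well-defined bijections onto $\Sp^n$.

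\emph{Row leaf.} Apply \Cref{lem:pairwiseW2} with $U_0=U_1=U_\ast$. Then $B=U_\ast^{1/2}U_\ast U_\ast^{1/2}=U_\ast^2$, so $\tr(B^{1/2})=\tr(U_\ast)$. The formula becomes
\[
d_{\mathrm B}^2(V_0\ot U_\ast,V_1\ot U_\ast)=\tr(U_\ast)\bigl(\tr(V_0)+\tr(V_1)-2\tr\bigl((V_0^{1/2}V_1V_0^{1/2})^{1/2}\bigr)\bigr),
\]
and the bracket is exactly $d_{\mathrm B}^2(V_0,V_1)$.

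\emph{Column leaf.} Write $\alpha_iV_\ast\ot U_i=V_\ast\ot M_i$ with $M_i:=\alpha_iU_i$. The same computation with the roles of the factors swapped gives $A=V_\ast^2$, so $\tr(A^{1/2})=\tr(V_\ast)$. This yields $\tr(V_\ast)\,d_{\mathrm B}^2(M_0,M_1)$.

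\emph{Bijectivity of the maps.} For the row leaf, $\Psi^{\mathrm{row}}_{U_\ast}$ is well defined and injective, since $V\ot U_\ast$ determines $V$; for instance $\operatorname{tr}_2(V\ot U_\ast)=\tr(U_\ast)V$. It is surjective onto $\Sp^n$ by definition. For the column leaf, I need the representation $\alpha V_\ast\ot U$ with $\det U=1$ to be unique. Uniqueness follows from \Cref{lem:modelmanifold}, applied to the pair $(U,\alpha V_\ast)\in\mcM_n$. Surjectivity: any $M\in\Sp^n$ equals $\alpha U$ with $\alpha=(\det M)^{1/n}$ and $U=(\det M)^{-1/n}M$, exactly as in \Cref{lem:geodesicallyclosedleaves}. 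Hence $\Psi^{\mathrm{col}}_{V_\ast}$ is a bijection. The two distance identities then say these bijections scale squared distances by the positive constants $\tr(U_\ast)$ and $\tr(V_\ast)$, so the leaves are homothetic copies of $(\Sp^n,d_{\mathrm B})$.

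\emph{Isotropic leaves.} These are the special cases $V_\ast=I_n$ and $U_\ast=I_n$, with scaling constant $n$.

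\emph{Main obstacle.} The only subtle point is the well-definedness of the column map, which rests on the uniqueness of the gauge-fixed factorization. I will make that explicit via \Cref{lem:modelmanifold}; the rest is substitution.
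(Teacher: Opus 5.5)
Your proof is correct and is essentially the paper's argument. The paper computes $\bigl((V_0\ot U_\ast)^{1/2}(V_1\ot U_\ast)(V_0\ot U_\ast)^{1/2}\bigr)^{1/2}=(V_0^{1/2}V_1V_0^{1/2})^{1/2}\ot U_\ast$ directly and takes traces, which is what you get by specializing \Cref{lem:pairwiseW2} with $B=U_\ast^2$ (resp.\ $A=V_\ast^2$), and it gets bijectivity of the column map from the same unique factorization $M=\alpha U$. Your explicit well-definedness check for $\Psi^{\mathrm{col}}_{V_\ast}$, via injectivity of $\Phi$ in \Cref{lem:modelmanifold}, is a small point the paper leaves implicit.
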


\begin{proof}
	The first map is bijective. The second is bijective as well since every $M\in\Sp^n$ admits the unique factorization $M=\alpha U$, where $\alpha:=(\det M)^{1/n}>0$ and $U:=\alpha^{-1}M$ has determinant one.
	
	For $\mathcal F(U_\ast)$, use the Bures formula together with
	\[
	((V_0\ot U_\ast)^{1/2}(V_1\ot U_\ast)(V_0\ot U_\ast)^{1/2})^{1/2}
	=
	\bigl(V_0^{1/2}V_1V_0^{1/2}\bigr)^{1/2}\ot U_\ast.
	\]
	Taking traces and using $\tr(V_i\ot U_\ast)=\tr(V_i)\tr(U_\ast)$ yields
	\[
	d_{\mathrm B}^2(V_0\ot U_\ast,V_1\ot U_\ast)
	=
	\tr(U_\ast)\,d_{\mathrm B}^2(V_0,V_1).
	\]
	
	For $\mathcal G(V_\ast)$, set $M_i=\alpha_iU_i$. The analogous identity is
	\[
	((V_\ast\ot M_0)^{1/2}(V_\ast\ot M_1)(V_\ast\ot M_0)^{1/2})^{1/2}
	=
	V_\ast\ot(M_0^{1/2}M_1M_0^{1/2})^{1/2}.
	\]
	The same trace calculation gives
	\[
	d_{\mathrm B}^2(\alpha_0V_\ast\ot U_0,\alpha_1V_\ast\ot U_1)
	=
	\tr(V_\ast)\,d_{\mathrm B}^2(M_0,M_1)
	=
	\tr(V_\ast)\,d_{\mathrm B}^2(\alpha_0U_0,\alpha_1U_1).
	\]
	The isotropic formulas are the special cases $U_\ast=I_n$ and $V_\ast=I_n$.
\end{proof}

Together, \Cref{lem:geodesicallyclosedleaves,lem:homotheticleaves} show that each factor leaf is geodesically closed in the ambient Bures geometry and is a homothetic copy of $(\Sp^n,d_{\mathrm B})$.

\begin{corollary}
	\label{cor:leafwisereduction}
	Suppose that the data lie in a common $U$-factor leaf, $K_i=V_i\ot U_\ast\in\mathcal F(U_\ast)$, $i=1,\dots,N$.
	Then for every $V\ot U_\ast\in\mathcal F(U_\ast)$,
	\[
	\mcJ(V\ot U_\ast)
	=
	\tr(U_\ast)\sum_{i=1}^N w_i\,d_{\mathrm B}^2(V,V_i).
	\]
	If $\bar V\in\Sp^n$ is the Bures--Wasserstein barycenter of $\{V_i\}_{i=1}^N$, then $\bar K=(\Psi_{U_\ast}^{\mathrm{row}})^{-1}(\bar V)=\bar V\ot U_\ast$ is the unique leafwise barycenter on $\mathcal F(U_\ast)$.
	
	Likewise, suppose the data lie in a common $V$-factor leaf, $K_i=\alpha_iV_\ast\ot U_i\in\mathcal G(V_\ast)$, $i=1,\dots,N$.
	Then for every $\alpha V_\ast\ot U\in\mathcal G(V_\ast)$,
	\[
	\mcJ(\alpha V_\ast\ot U)
	=
	\tr(V_\ast)\sum_{i=1}^N w_i\,d_{\mathrm B}^2(\alpha U,\alpha_iU_i).
	\]
	If $\bar M\in\Sp^n$ is the Bures--Wasserstein barycenter of $\{\alpha_iU_i\}_{i=1}^N$, then $\bar K=(\Psi_{V_\ast}^{\mathrm{col}})^{-1}(\bar M)$ is the unique leafwise barycenter on $\mathcal G(V_\ast)$.
\end{corollary}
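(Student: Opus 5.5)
The plan is to transport the leafwise problem to $\Sp^n$ with the homothety of \Cref{lem:homotheticleaves}. There the objective becomes a positive multiple of the standard Bures--Wasserstein barycenter functional. Existence and uniqueness of that barycenter, due to \cite{AguehCarlier2011,AlvarezEstebanDelBarrioCuestaAlbertosMatran2016}, then pull back along the bijections $\Psi^{\mathrm{row}}_{U_\ast}$ and $\Psi^{\mathrm{col}}_{V_\ast}$. Throughout, ``leafwise barycenter'' means a minimizer of $\mcJ$ restricted to the leaf; no competitor outside the leaf is considered.

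First, the $U$-leaf. Take any $V\ot U_\ast\in\mathcal F(U_\ast)$. Apply the first identity of \Cref{lem:homotheticleaves} to each pair $(V\ot U_\ast, V_i\ot U_\ast)$ and sum with weights $w_i$. This gives
\[
\mcJ(V\ot U_\ast)=\tr(U_\ast)\sum_i w_i\, d_{\mathrm B}^2(V,V_i).
\]
Since $\tr(U_\ast)>0$ is a constant, minimizing $\mcJ$ over the leaf is the same as minimizing $F(V):=\sum_i w_i d_{\mathrm B}^2(V,V_i)$ over $V\in\Sp^n$. The leaf is parametrized bijectively by $V\mapsto V\ot U_\ast$, and this map has inverse $\Psi^{\mathrm{row}}_{U_\ast}$. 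Hence $\bar V\ot U_\ast$ minimizes $\mcJ$ on the leaf if and only if $\bar V$ minimizes $F$. Uniqueness transfers through the bijection.

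Second, the $V$-leaf, handled the same way. The second identity of \Cref{lem:homotheticleaves} gives
\[
\mcJ(\alpha V_\ast\ot U)=\tr(V_\ast)\sum_i w_i d_{\mathrm B}^2(\alpha U,\alpha_iU_i).
\]
By the proof of that lemma, $(\alpha,U)\mapsto \alpha U$ is a bijection from $\{\alpha>0,\ \det U=1\}$ onto $\Sp^n$, with inverse $M\mapsto((\det M)^{1/n},(\det M)^{-1/n}M)$. Minimizing over the leaf is therefore the same as minimizing $G(M):=\sum_i w_i d_{\mathrm B}^2(M,\alpha_iU_i)$ over all of $\Sp^n$. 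The unique minimizer $\bar M$ pulls back to the unique leafwise minimizer $(\Psi^{\mathrm{col}}_{V_\ast})^{-1}(\bar M)=(\det\bar M)^{1/n}V_\ast\ot(\det\bar M)^{-1/n}\bar M=V_\ast\ot\bar M$.

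The main obstacle is the existence and uniqueness of the Bures--Wasserstein barycenter in $\Sp^n$ for positive definite data. I will cite this as standard rather than reprove it. Existence and uniqueness are in \cite{AguehCarlier2011}, via strict convexity of the multimarginal/dual formulation. The fact that the minimizer is positive definite is in \cite{AlvarezEstebanDelBarrioCuestaAlbertosMatran2016}, where it is characterized as the unique solution in $\Sp^n$ of the fixed-point equation $\bar V=\sum_i w_i(\bar V^{1/2}V_i\bar V^{1/2})^{1/2}$. One point needs care: the cited theorems minimize over all positive semidefinite matrices, or over all Gaussian laws. I will note that the minimizer is positive definite, so it lies in $\Sp^n$, and restricting the feasible set to $\Sp^n$ keeps it the unique minimizer there. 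With that in place, the pullback argument above completes both cases.
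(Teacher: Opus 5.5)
Your proposal is correct and follows the paper's proof: you sum the homothety identities of \Cref{lem:homotheticleaves} over the data, transfer minimizers through the bijections $\Psi^{\mathrm{row}}_{U_\ast}$ and $\Psi^{\mathrm{col}}_{V_\ast}$, and invoke uniqueness of the Bures--Wasserstein barycenter on $\Sp^n$. The only difference is the citation for that last step: the paper quotes uniqueness directly on $\Sp^n$ from \cite[Section~6]{BhatiaJainLim2019}, whereas your route through the Gaussian/Wasserstein results needs the extra remark that the minimizer is positive definite, which you correctly supply.
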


\begin{proof}
	For the row leaf, \Cref{lem:homotheticleaves} gives $d_{\mathrm B}^2(V\ot U_\ast,V_i\ot U_\ast)=\tr(U_\ast)\,d_{\mathrm B}^2(V,V_i)$ for $i=1,\dots,N$, and hence $\mcJ(V\ot U_\ast)=\tr(U_\ast)\sum_{i=1}^N w_i\,d_{\mathrm B}^2(V,V_i)$.
	Under the bijection $\Psi_{U_\ast}^{\mathrm{row}}$, minimizers correspond as
	\[
	\argmin_{V\ot U_\ast\in\mathcal F(U_\ast)}\mcJ(V\ot U_\ast)
	=
	\bigl(\Psi_{U_\ast}^{\mathrm{row}}\bigr)^{-1}\!\left(
	\argmin_{V\in\Sp^n}\sum_{i=1}^N w_i\,d_{\mathrm B}^2(V,V_i)
	\right).
	\]
	The Bures--Wasserstein barycenter on $\Sp^n$ is unique \cite[Section~6]{BhatiaJainLim2019}. Therefore $\bar V\ot U_\ast$ is the unique leafwise minimizer.
	
	The column-leaf identity follows in the same way: $\mcJ(\alpha V_\ast\ot U)=\tr(V_\ast)\sum_{i=1}^N w_i\,d_{\mathrm B}^2(\alpha U,\alpha_iU_i)$ for every $\alpha V_\ast\ot U\in\mathcal G(V_\ast)$. The bijection $\Psi_{V_\ast}^{\mathrm{col}}(\alpha V_\ast\ot U)=\alpha U$ transports the problem to the standard barycenter problem for $\{\alpha_iU_i\}$ on $\Sp^n$, whose unique minimizer is $\bar M$.
\end{proof}

\begin{example}
	\label{ex:isotropicleaf}
	If $K_i=\alpha_i I_{n^2}$ with $\alpha_i>0$, then the barycenter on the isotropic row leaf is $\bar K=\bar\alpha I_{n^2}$, where $\sqrt{\bar\alpha}=\sum_{i=1}^N w_i\sqrt{\alpha_i}$ and $\bar\alpha=(\sum_{i=1}^N w_i\sqrt{\alpha_i})^2$. This is the one-dimensional Bures--Wasserstein barycenter obtained from \Cref{cor:leafwisereduction} with $U_i=I_n$.
\end{example}

\section{Numerical illustrations}
\label{sec:numerics}

The numerical illustrations check the formulas in finite precision. They cover the pairwise reduction, the fixed-chart departure moduli, and the Perron formula for the fixed-coordinate-slice barycenter problem. All runs use double precision, random seed 42, and 20 independent trials. In the commuting experiments, diagonal factors are generated from log-coordinates: for $\xi\in\R^n$, put $\bar\xi=n^{-1}\sum_p\xi_p$, $D_0(\xi)=\diag(e^{\xi_p-\bar\xi})_{p=1}^n$, and $D_1(\xi)=\diag(e^{\xi_p})_{p=1}^n$. Thus $\det D_0(\xi)=1$. All normal random variables below are independent standard normals unless stated otherwise, and barycenter weights are uniform.

\subsection{Pairwise reduction}
\label{subsec:exp1}

For $n \in \{8,16,32,64,128\}$, random factors are generated from $GG^\top+10^{-2}I$, with the $U$-factor determinant-normalized. Table~\ref{tab:pairwise} compares the ambient $n^2\times n^2$ Bures evaluation with the reduced formula in \Cref{lem:pairwiseW2}. For $n=128$, the ambient computation was omitted because of its storage and time cost.

\begin{table}[H]
	\centering
	\caption{Pairwise spectral reduction. Times are in seconds.}
	\label{tab:pairwise}
	\small
	\setlength{\tabcolsep}{3pt}
	\begin{tabular*}{\textwidth}{@{\extracolsep{\fill}}rccccc@{}}
			\toprule
			$n$ & ambient & reduced & ratio & rel.~err. & storage ratio \\
			\midrule
			$8$ & $5.26\times10^{-4}$~($7.50\times10^{-5}$) & $1.17\times10^{-4}$~($3.49\times10^{-5}$) & $4.49$ & $2.41\times10^{-14}$ & $32$ \\
			$16$ & $0.0409$~($3.08\times10^{-3}$) & $2.43\times10^{-4}$~($3.20\times10^{-5}$) & $169$ & $1.90\times10^{-14}$ & $128$ \\
			$32$ & $1.01$~($0.0113$) & $5.36\times10^{-4}$~($4.63\times10^{-5}$) & $1.88\times10^{3}$ & $1.15\times10^{-14}$ & $512$ \\
			$64$ & $18.1$~($0.135$) & $1.68\times10^{-3}$~($1.27\times10^{-4}$) & $1.08\times10^{4}$ & $8.97\times10^{-15}$ & $2048$ \\
			$128$ & --- & $0.0312$~($2.13\times10^{-3}$) & --- & --- & $8192$ \\
			\bottomrule
	\end{tabular*}
	\tabnote{Means over 20 trials, with standard deviations in parentheses. Relative errors are measured against the ambient computation when available; storage ratios are theoretical.}
\end{table}

\subsection{Fixed-chart geodesic closure diagnostics}
\label{subsec:exp2}

For the fixed-chart diagnostics, we take $n=32$ and use diagonal endpoints with square-root profiles $H_0=ab^\top$ and $H_1=cd^\top$. The two leaf regimes set $c=a$ or $d=b$, while the generic regime draws both factor profiles independently. Table~\ref{tab:departure} reports $\max_j\delta_{\mathrm{geo}}(j/200)$, $\max_j\delta_{\mathrm{diag}}(j/200)$, and a finite-window quadratic fit for $\delta_{\mathrm{geo}}(t)^2$ near zero. The fitted coefficient is obtained by least squares for $\delta_{\mathrm{geo}}(t_j)^2\approx c\,t_j^2$ with no intercept, using $t_j=j/200$, $j=1,\dots,10$. The factor-leaf cases vanish to machine precision, while the generic perturbations give positive moduli.

\begin{table}[htbp]
	\centering
	\caption{Fixed-chart departure moduli.}
	\label{tab:departure}
	\small
	\setlength{\tabcolsep}{4pt}
	\begin{tabular*}{\textwidth}{@{\extracolsep{\fill}}lccccc@{}}
			\toprule
			regime & $\max\delta_{\mathrm{geo}}$ & $\max\delta_{\mathrm{diag}}$ & fitted coeff. & predicted coeff. & fit rel.~err. \\
			\midrule
			shared $U$-factor leaf & $0$~($0$) & $0$~($0$) & $0$~($0$) & $0$~($0$) & $0$ \\
			shared $V$-factor leaf & $0$~($0$) & $0$~($0$) & $0$~($0$) & $0$~($0$) & $0$ \\
			generic perturbation & $1.57$~($0.24$) & $5$~($0.775$) & $32.1$~($9.54$) & $37.5$~($11.2$) & $0.146$ \\
			\bottomrule
	\end{tabular*}
	\tabnote{Means over 20 draws, with standard deviations in parentheses. In the two factor-leaf regimes, all reported moduli vanish up to machine precision.}
\end{table}

\subsection{Barycenters}
\label{subsec:exp3}

In the barycenter benchmarks, $n=8$ and $N=8$. Dataset~A has $U_i=I_n$ and $V_i=\alpha_iI_n$, $\alpha_i=e^{\zeta_i}$; Dataset~B uses $U_i=D_0(0.1\,\xi_i)$ and $V_i=\alpha_iD_1(0.1\,\eta_i)$; Dataset~C uses the same construction with perturbation size $1$. The Perron formula is compared with a bounded constrained solve in logarithmic variables $s_p=\log x_p$, $r_q=\log y_q$, subject to $\sum_p s_p=0$. The numerical solve uses SciPy's SLSQP implementation with the linear equality constraint, bounds $[-8,8]$ on all logarithmic variables, \texttt{ftol}$=10^{-12}$, and a maximum of $2000$ iterations. We measure coordinate error by $\max\{\|\widehat x-x^\star\|_2/\|x^\star\|_2,\|\widehat y-y^\star\|_2/\|y^\star\|_2\}$ and use the projected first-order residual in the same variables. Figure~\ref{fig:barycenter} and Table~\ref{tab:barycenter} report the optimization gaps, objective values, and coordinate errors.

\begin{figure}[htbp]
	\centering
	\includegraphics[width=0.70\textwidth]{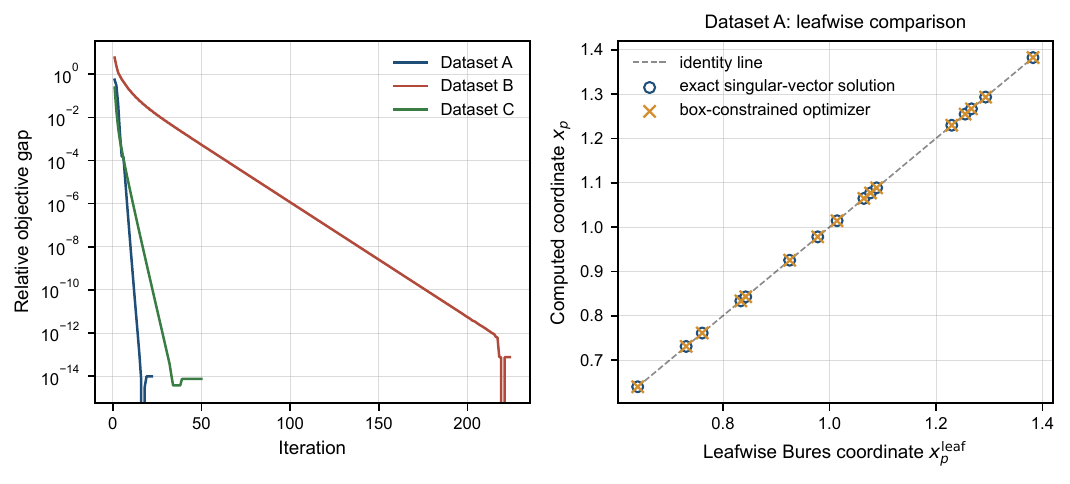}
	\caption{Fixed-coordinate-slice barycenter benchmarks: objective gaps for independent logarithmic-coordinate solves, and Dataset~A coordinates against the leafwise Bures coordinates.}
	\label{fig:barycenter}
\end{figure}

\begin{table}[htbp]
	\centering
	\caption{Fixed-coordinate-slice barycenter benchmarks.}
	\label{tab:barycenter}
	\small
	\setlength{\tabcolsep}{3pt}
	\begin{tabular*}{\textwidth}{@{\extracolsep{\fill}}clcccc@{}}
			\toprule
			dataset & description & formula obj. & numerical obj. & residual & coord. error \\
			\midrule
			A & isotropic factor leaf & $17.8$~($2.67$) & $17.8$~($2.67$) & $2.24\times10^{-7}$ & $8.71\times10^{-9}$ \\
			B & near-isotropic commuting & $1.57$~($0.51$) & $1.57$~($0.51$) & $1.15\times10^{-7}$ & $4.99\times10^{-9}$ \\
			C & generic commuting & $23.6$~($3.74$) & $23.6$~($3.74$) & $2.29\times10^{-7}$ & $8.45\times10^{-9}$ \\
			\bottomrule
	\end{tabular*}
	\tabnote{Means over 20 draws, with standard deviations in parentheses. The last two columns report the projected first-order residual and the relative coordinate error.}
\end{table}

\paragraph{Reproducibility.}
The experiments use no external data. The tables and figure are generated from the synthetic protocols above using Python~3.11.8, NumPy~1.26.4, SciPy~1.9.3, and Matplotlib~3.10.8 on a workstation with an Intel Core i7-12700H processor and 16GB RAM. 

\section{Concluding remarks}
\label{sec:con}
The determinant-normalized parametrization removes the scalar gauge from Kronecker factorizations and realizes the Kronecker positive-definite model as an embedded submanifold of the full cone. The results above show that this submanifold has very limited compatibility with ambient Bures geodesics: aside from the one-factor subfamilies, a Bures segment joining two Kronecker points leaves the model immediately. In commuting coordinates this failure is visible as the loss of rank one in the square-root profile, which also gives the departure moduli.

The partial-trace residual gives a coordinate-free form of the same obstruction. Although it is only a tangency condition for a general curve, for Bures geodesics determined by Kronecker endpoints it becomes rigid and forces the endpoint pair onto a common factor leaf. This links the fixed-chart rank criterion with the noncommuting endpoint calculation.

The barycenter formulas obtained here reflect the same restriction. Exact minimizers are available in a fixed commuting-coordinate slice through Perron singular vectors, and on common leaves through the usual Bures--Wasserstein barycenter on $\Sp^n$. A fuller understanding of barycenters on \(\mathcal K_n\) would require structural descriptions of global minimizers, exact formulas beyond these special classes, and convergence guarantees for optimization over the full determinant-normalized model.

\section*{Acknowledgements}
This research is supported by National Key R\&D Program of China (2024YFA1012401), the Science and Technology Commission of Shanghai Municipality (23JC1400501), and Natural Science Foundation of China (12241103).

\section*{Data availability}
No external data were used in this work. The numerical illustrations are reproducible from the synthetic protocols described in \Cref{sec:numerics}.



\bibliographystyle{cas-model2-names}

\bibliography{cas-refs}



\end{document}